\documentclass[letterpaper,12pt]{amsart}

\usepackage[all]{xy}                        %

\usepackage[margin=1in]{geometry}

\CompileMatrices                            

\UseTips                                    

\input xypic
\usepackage[bookmarks=true]{hyperref}       

\usepackage{amssymb,latexsym,amsmath,amscd}
\usepackage{xspace}
\usepackage{color}
\usepackage{graphicx}
\usepackage{caption}

\usepackage{mathtools}
\usepackage{tikz}
\usetikzlibrary{cd}

\usepackage[utf8]{inputenc}
\usepackage{fourier}
\usepackage{array}
\usepackage{makecell}

\usepackage{array}
\newcolumntype{L}[1]{>{\raggedright\let\newline\\\arraybackslash\hspace{0pt}}m{#1}}
\newcolumntype{C}[1]{>{\centering\let\newline\\\arraybackslash\hspace{0pt}}m{#1}}
\newcolumntype{R}[1]{>{\raggedleft\let\newline\\\arraybackslash\hspace{0pt}}m{#1}}

\DeclareUnicodeCharacter{02B9}{\ensuremath{'}}

\reversemarginpar

\vfuzz 2pt 
\hfuzz 2pt 


\theoremstyle{plain}
\newtheorem{theorem}{Theorem}[section]
\newtheorem*{theorem*}{Theorem}
\newtheorem{proposition}[theorem]{Proposition}
\newtheorem{corollary}[theorem]{Corollary}
\newtheorem{lemma}[theorem]{Lemma}

\theoremstyle{definition}

\newtheorem{remark}[theorem]{Remark}
\newtheorem{example}[theorem]{Example}

\newcommand{\enm}[1]{\ensuremath{#1}}          %
\newcommand{\op}[1]{\operatorname{#1}}
\newcommand{\cal}[1]{\mathcal{#1}}

\newcommand{\CC}{\enm{\mathbb{C}}}

\newcommand{\QQ}{\enm{\mathbb{Q}}}
\newcommand{\ZZ}{\enm{\mathbb{Z}}}
\newcommand{\FF}{\enm{\mathbb{F}}}

\newcommand{\PP}{\enm{\mathbb{P}}}

\newcommand{\Cc}{\enm{\cal{C}}}
\newcommand{\Dd}{\enm{\cal{D}}}
\newcommand{\Ee}{\enm{\cal{E}}}
\newcommand{\Ff}{\enm{\cal{F}}}
\newcommand{\Gg}{\enm{\cal{G}}}
\newcommand{\Hh}{\enm{\cal{H}}}
\newcommand{\Ii}{\enm{\cal{I}}}

\newcommand{\Ll}{\enm{\cal{L}}}

\newcommand{\Nn}{\enm{\cal{N}}}
\newcommand{\Oo}{\enm{\cal{O}}}

\newcommand{\Qq}{\enm{\cal{Q}}}

\newcommand{\Tt}{\enm{\cal{T}}}

\renewcommand{\phi}{\varphi}
\renewcommand{\theta}{\vartheta}
\renewcommand{\epsilon}{\varepsilon}


\newcommand{\Pic}{\op{Pic}}

\newcommand{\Ext}{\op{Ext}}

\newcommand{\End}{\op{End}}

\newcommand{\tensor}{\otimes}         
\newcommand{\dirsum}{\oplus}

\newcommand{\Dirsum}{\bigoplus}

\newcommand{\intsec}{\cap}

\renewcommand{\to}[1][]{\xrightarrow{\ #1\ }}







\newcommand{\old}[1]{}







\begin{document}

\title[Logarithmic vector bundles on the blown-up surface]{Logarithmic vector bundles on the blown-up surface}

\author{Sukmoon Huh} 
\address{Sungkyunkwan University, Suwon 440-746, Korea}
\email{sukmoonh@skku.edu}

\author{Min Gyo Jeong}
\address{Sungkyunkwan University, Suwon 440-746, Korea}
\email{nida08@skku.edu}

\thanks{SH is supported by the National Research Foundation of Korea(NRF) grant funded by the Korea government(MSIT) (No. 2018R1C1A6004285 and No. 2016R1A5A1008055).}

\keywords{logarithmic vector bundle, blow up, Hirzebruch surface, Torelli problem}

\subjclass[2020]{14F06, 14J60, 14C34}

\maketitle

\begin{abstract}
We study the logarithmic vector bundles associated to arrangements of smooth irreducible curves with small degree on the blow-up of the projective plane at one point. We then investigate whether they are Torelli arrangements, that is, they can be recovered from the attached logarithmic vector bundles. 
\end{abstract}

\section{introduction}
In this paper, we study logarithmic vector bundles on the blow-up of a smooth projective surface, mostly in the case when the surface is the projective plane. For a given arrangement $\Dd$ of reduced and effective divisors on a smooth projective variety $X$, one can consider a sheaf $\Omega_X^1(\log \Dd)$ of differential $1$-forms with logarithmic poles along $\Dd$, called the {\it logarithmic sheaf} associated to $\Dd$. It is first introduced by P.~Deligne in \cite{deligne1971theorie} to define the mixed Hodge structure on the complement of $\Dd$. When the associated divisors are arranged with simple normal crossings, the sheaf turns out to be locally free and we call it {\it logarithmic vector bundle}. In this article, we assume that the divisors in consideration are arranged with simple normal crossings, unless otherwise specified. 

In \cite{dolgachev1993arrangements}, I.~Dolgachev and M.~Kapranov studied the logarithmic vector bundles associated to arrangements of hyperplanes $\Hh=\{H_1,\ldots,H_m\}$ in the projective space $\PP^{n}$. They are interested in the question when $\Hh$ can be recovered from $\Omega^1_{\PP^n}(\log\Hh)$; this kind of problem is called the \textit{Torelli problem}. The same problem can be considered for more general setting with arrangements $\Dd$ of divisors, and if $\Dd$ can be recovered from $\Omega_X^1(\log \Dd)$, then we call it a \textit{Torelli arrangement}. 

For example, in \cite{dolgachev1993arrangements} it was proven that a general hyperplane arrangement $\Hh$ in $\PP^n$ is a Torelli arrangement if $m\ge 2n+1$. More specifically, $\Hh$ is a Torelli arrangement if and only if the hyperplanes in $\Hh$ does not osculate any rational normal curve in $\PP^n$. Later the result is extended to the case $m\geq n+3$ by J.~Vall\`es in \cite{valles2000nombre}. Moreover, the associated logarithmic vector bundle $\Omega_{\PP^n}^1(\log \Hh)$ is stable with Chern polynomial $(1-t)^{n-m+1}$. Thus the Torelli arrangements on $\PP^n$ define a generically injective map from the family of arrangements to a moduli space of stable vector bundles, e.g. if $n=2$, then the induced map 
\begin{align*}
\Phi : ~~~\mathbf{F} &~~\longrightarrow ~~\mathbf{M}(c_1, c_2)\\
\Dd &\hspace{.1cm}\mapsto ~~\Omega_{\PP^2}^1(\log \Dd)
\end{align*}
is generically injective, where $\mathbf{F}$ is the family of arrangements of $m\geq 6$ lines in $\PP^2$ and $\mathbf{M}(c_1, c_2)$ is the moduli space of stable vector bundles with appropriate Chern classes. 

After that, many studies follow with more generalized and different settings. In \cite{kazushi2009logarithmic}, K.~Ueda and M.~Yosingwa suggestesd an equivalent condition for a single smooth hypersurface of degree $d\geq 3$ in $\PP^n$ to be a Torelli arrangement, that is, the defining polynomial of the hypersurface is not of Sebastiani-Thom type. In \cite{angelini2014logarithmic}, E.~Angelini considered arrangements of $\ell$ hypersurfaces $\{D_1,\ldots, D_{\ell}\}$ of the same degree $d\geq 2$ in $\PP^n$. She showed that if $\ell\geq N+4=\binom{n+d}{d} +3$, then it is a Torelli arrangement unless the hyperplanes in $\PP^N$ corresponding to $D_1,\ldots, D_{\ell}$ through the {\it Veronese map} of degree $d$ do not osculate a rational normal curve of degree $N$ in $\PP^N$.
In \cite{ballico2015torelli} E.~Ballico, S.~Huh and F.~Malaspina studied the arrangements of hyperplane sections in an $n$-dimensional smooth quadric hypersurface $\QQ_n$, to show that an arrangement of sufficiently many hyperplane sections is a Torelli arrangement. Recently in \cite{HMPV}
a generalized version of logarithmic vector bundle is introduced and its Torelli problem is investigated. 

In this article, as another attempt with the same direction, we focus on the logarithmic vector bundles on the blow-up of a smooth projective surface. In Section $2$ we provide our main theorem asserting the relation between the logarithmic vector bundles on a smooth projective surface $X$ and its blow-up $\widetilde{X}$; see Theorem \ref{main}. Using this main theorem, we obtain a positive answer to the Torelli problem for logarithmic vector bundles on the Del Pezzo surface $\FF=\PP(\Oo_{\PP^1}\oplus \Oo_{\PP^1}(-1))$ of degree $8$ associated to arrangements of rational normal curves of degree $3$. Then in Section $3$ we consider arrangements of lower degree curves on $\FF$, intersecting the exceptional divisor $E$.


\section{Logarithmic vector bundle on blown-up surfaces}
Let $X$ be a smooth projective variety of dimension $n\ge 2$ over the field of complex numbers. Let us denote by $\Dd=\{D_1, \dots, D_m\}$ be an arragement of irreducible divisors $D_i$'s on $X$. Then one can define the sheaf of differential $1$-forms with logarithmic poles along $\Dd$, denoted by $\Omega_X^1(\log \Dd)$ and called the {\it logarithmic sheaf}; refer to \cite{deligne1971theorie}. In general, the sheaf $\Omega_X^1(\log \Dd)$ is reflexive. When $\Dd$ has simple normal crossings, then the logarithmic sheaf turns out to be locally free and it fits into an exact sequence
\[
0\to \Omega^1_{X} \to \Omega^1_{X}(\log \Dd) \to \Dirsum_{i=1}^{m}\Oo_{D_i}\to 0,
\]
called the {\it Poincar\'e residue sequence}. In this case, if $z_1\cdots z_{\ell}=0$ is the local defining equation of $\Dd$ near $x\in X$, then the section of $\Omega_X^1(\log \Dd)$ are meromorphic differential forms 
\[
\omega + \sum_{i=1}^l
u_i d\log z_i
\]
with a $1$-form $\omega$ and functions $u_i$, where $\langle z_1, \dots, z_n\rangle$ is the local coordinates.

\begin{remark}
For a sub-arrangement $\Dd'=\{D_1, \dots, D_k\}\subset \Dd$ with $k<m$, one can also have a generalized version of the Poincar\'e residue sequence
\[
0\to \Omega_X^1(\log \Dd') \to \Omega_X^1(\log \Dd ) \to \bigoplus_{i=k+1}^m \Oo_{D_i} \to 0.
\]
\end{remark}

From now on we assume $n=2$, i.e., $X$ is a smooth projective surface. For a fixed finite set of distinct points $Z=\{p_1, \dots, p_k\}\subset X$, denote by $\eta:\widetilde{X}\rightarrow X$ the blow-up of $X$ along $Z$ with the exceptional divisor $E_j:=\eta^{-1}(p_j)$ for each $j$. Setting $E=E_1+\dots + E_k$, we have an exact sequence
\begin{equation}\label{OmegaPullback}
    0\to\eta^{*}\Omega^1_{X} \to \Omega^1_{\widetilde{X}} \to \Omega^1_{E}\cong\Oo_{E}(-2) \to 0;
\end{equation}
refer to \cite[Prop. II.8.11]{hartshorne2013algebraic}. Tensoring the sequence \eqref{OmegaPullback} with $\Oo_{E}$, we get
\[
0\to Tor^1_{\Oo_{\widetilde{X}}}(\Oo_{E}(-2),\Oo_{E})\cong\Oo_{E}(-1)\to \left( \eta^{*}\Omega^1_{X}\right)_{|E}\to \left (\Omega^1_{\widetilde{X}}\right)_{|E}\to\Oo_{E}(-2)\to 0.
\]
Since $\left( \eta^{*}\Omega^1_{X}\right)_{|E}\cong\Oo_{E}\oplus\Oo_{E}$ on $E\subset{\widetilde{X}}$, we obtain the following from the sequence in the above
\begin{equation}\label{Omega_E}
\left (\Omega^1_{\widetilde{X}}\right)_{|E}\cong\Oo_{E}(1)\oplus\Oo_{E}(-2).
\end{equation}

\noindent For an arrangement $\Dd=\{C_1, C_2, \ldots, C_m\}$ of smooth irreducible curves on $X$, we denote by 
\[
\widetilde{\Dd}=\{\widetilde{C}_{1},\widetilde{C}_{2},\ldots,\widetilde{C}_{m}\}
\]
an arrangement of smooth irreducible curves with $\widetilde{C}_i$ the strict transform of $C_i$ along $\eta$ for each $i$. We also set
\[
\widetilde{\Dd}^E=\widetilde{\Dd} \cup \{E_1, \dots, E_k\}.
\]

\begin{theorem}\label{main}
If $Z \cap C_i=\emptyset$ for any $i$, then we have
\[
\eta^{*}\Omega^1_{X}(\log\Dd)\cong\Omega^1_{\widetilde{X}}(\log\widetilde{\Dd}^{E})(-E).
\]
\end{theorem}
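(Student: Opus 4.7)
The plan is to exhibit both sides as the same subsheaf of $\Omega^1_{\widetilde X}(\log \widetilde{\Dd}^E)$. The hypothesis $Z \cap C_i = \emptyset$ implies that $\widetilde{\Dd}$ and $E$ sit in disjoint open regions of $\widetilde{X}$, so the analysis can be done in two separate types of charts: those meeting $\widetilde{\Dd}$ (where $\eta$ is a local isomorphism) and those meeting $E$ (where $\Dd$ plays no role). The crux will be a local computation on the exceptional charts; everything else will be formal bookkeeping of Poincar\'e residue sequences.

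The first step will be to settle the case $\Dd = \emptyset$, i.e., to prove
\[
\eta^{*}\Omega^{1}_{X} \;\cong\; \Omega^{1}_{\widetilde{X}}(\log E)(-E).
\]
Choosing a standard affine chart of the blow-up at a point of $Z$ with coordinates $(t,z)$ and $(x,y) = (t, tz)$ so that $E = \{t = 0\}$, the pullback $\eta^{*}\Omega^{1}_{X}$ is freely generated by $dx = dt$ and $dy = z\,dt + t\,dz$, hence by $dt$ and $t\,dz$. On the other hand, $\Omega^{1}_{\widetilde{X}}(\log E)(-E)$ is locally freely generated by $t \cdot (dt/t) = dt$ and $t \cdot dz$, so the two subsheaves of $\Omega^{1}_{\widetilde{X}}$ agree near $E$; off $E$ both equal $\Omega^{1}_{\widetilde{X}}$ and the local identifications glue. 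This is consistent with \eqref{OmegaPullback}, whose cokernel $\Oo_{E}(-2)$ matches the cokernel of $\Omega^{1}_{\widetilde{X}}(\log E)(-E) \hookrightarrow \Omega^{1}_{\widetilde{X}}$.

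The second step will be to incorporate the logarithmic structure along $\widetilde{\Dd}$. Since $\eta$ is a local isomorphism near each $C_{i}$, pulling back the Poincar\'e residue sequence for $\Dd$ yields the exact sequence
\[
0 \to \eta^{*}\Omega^{1}_{X} \to \eta^{*}\Omega^{1}_{X}(\log \Dd) \to \bigoplus_{i=1}^{m} \Oo_{\widetilde{C}_{i}} \to 0.
\]
In parallel, the Poincar\'e residue sequence for the sub-arrangement $\{E_{1},\dots,E_{k}\} \subset \widetilde{\Dd}^{E}$, twisted by $\Oo(-E)$ (which leaves each $\Oo_{\widetilde{C}_{i}}$ unchanged, thanks to $\widetilde{C}_{i} \cap E = \emptyset$), reads
\[
0 \to \Omega^{1}_{\widetilde{X}}(\log E)(-E) \to \Omega^{1}_{\widetilde{X}}(\log \widetilde{\Dd}^{E})(-E) \to \bigoplus_{i=1}^{m} \Oo_{\widetilde{C}_{i}} \to 0,
\]
and the left-hand term is $\eta^{*}\Omega^{1}_{X}$ by the first step. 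The natural pullback of logarithmic $1$-forms provides a comparison map $\phi$ between the middle terms. Local inspection (near $E$ both equal $\Omega^{1}_{\widetilde{X}}(\log E)(-E)$, near $\widetilde{\Dd}$ both equal $\Omega^{1}_{\widetilde{X}}(\log \widetilde{\Dd})$) shows that $\phi$ factors through the $\Oo(-E)$-twist and completes the diagram with identity maps on kernel and cokernel, so the five-lemma forces $\phi$ to be an isomorphism.

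The principal obstacle is the local computation in the first step: recognising the submodule of $\Omega^{1}_{\widetilde{X}}$ generated by $dt$ and $t\,dz$ as precisely $\Omega^{1}_{\widetilde{X}}(\log E)(-E)$, rather than some intermediate sheaf between $\Omega^{1}_{\widetilde{X}}(-E)$ and $\Omega^{1}_{\widetilde{X}}$. Once that local model is firmly in hand, the remaining steps are formal consequences of the residue sequences and the disjointness $\widetilde{\Dd} \cap E = \emptyset$.
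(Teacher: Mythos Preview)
Your proof is correct and takes a genuinely different route from the paper's. The paper argues entirely sheaf-theoretically: it computes the restrictions $\Omega^1_{\widetilde X}(\log\widetilde{\Dd})|_E$ and $\Omega^1_{\widetilde X}(\log\widetilde{\Dd}^E)|_E$, builds a large commutative diagram from the standard sequence of $E\subset\widetilde X$, observes that the comparison map to $E$ factors through $\Oo_E(-2)$, and then applies the Snake Lemma together with the uniqueness (up to scalar) of a nonzero map $\Omega^1_{\widetilde X}(\log\widetilde{\Dd})\to\Oo_E(-2)$ to identify both kernels. You instead isolate the essential local content by first proving the base case $\Dd=\emptyset$ via an explicit blow-up chart, recognising $\eta^*\Omega^1_X$ as the submodule $\langle dt,\,t\,dz\rangle=\Omega^1_{\widetilde X}(\log E)(-E)$, and then bootstrap: the disjointness $\widetilde{\Dd}\cap E=\emptyset$ lets you match the pulled-back residue sequence for $\Dd$ against the $(-E)$-twisted residue sequence for the sub-arrangement $\{E_1,\dots,E_k\}\subset\widetilde{\Dd}^E$, and the five-lemma finishes. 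Your argument is more elementary and makes the geometric reason for the isomorphism transparent (nothing interesting happens where $E$ and $\widetilde{\Dd}$ do not overlap), at the cost of a coordinate computation; the paper's approach is coordinate-free and yields as a byproduct the identification of $\Omega^1_{\widetilde X}(\log\widetilde{\Dd}^E)(-E)$ as a specific kernel inside $\Omega^1_{\widetilde X}(\log\widetilde{\Dd})$, which feeds directly into the push-forward computation used in Corollary~\ref{maincor}.
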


\begin{proof}
Tensoring the Poincar\'e residue sequences for $\Omega^1_{\widetilde{X}}(\log\widetilde{\Dd})$ and $\Omega^1_{\widetilde{X}}(\log\widetilde{\Dd}^{E})$ with $\Oo_{\widetilde{E}}$, we get
\[
\Omega^1_{\widetilde{X}}(\log\widetilde{\Dd})_{|E}\cong\Oo_{E}(1)\oplus\Oo_{E}(-2) \;\;\;\;
\text{and}\;\;\;\; \Omega^1_{\widetilde{X}}(\log\widetilde{\Dd}^{E})_{|E}\cong\Oo_{E}(-1)\oplus\Oo_{E}(-1),
\]
using (\ref{Omega_E}). Now, by tensoring the standard exact sequence for $E\subset \widetilde{X}$ 
\[
0\to \Oo_{\widetilde{X}}(-E) \to \Oo_{\widetilde{X}} \to \Oo_{E}\to 0,
\]
by the logarithmic vector bundles $\Omega^1_{\widetilde{X}}(\log\widetilde{\Dd})$ and $\Omega^1_{\widetilde{X}}(\log\widetilde{\Dd}^{E})$, one can get a commutative diagram 
\begin{equation}\label{comm1}
\begin{tikzcd}[
  row sep=small,
  ar symbol/.style = {draw=none,"\textstyle#1" description,sloped},
  isomorphic/.style = {ar symbol={\cong}},
  ]
 & & & & 0\ar[d] & \\
 & & 0\ar[d] &0\ar[d] & \Oo_{E}(1) \ar[d] &  \\
 & 0\ar[r] & \Omega^1_{\widetilde{X}}(\log\widetilde{\Dd})(-E) \ar[d]\ar[r] & \Omega^1_{\widetilde{X}}(\log\widetilde{\Dd}) \ar[d]\ar[r, "f"] & \Omega^1_{\widetilde{X}}(\log\widetilde{\Dd})\tensor\Oo_{E} \ar[d, "~g"]\ar[r] & 0 \\
 & 0\ar[r] & \Omega^1_{\widetilde{X}}(\log\widetilde{\Dd}^{E})(-E) \ar[d]\ar[r] & \Omega^1_{\widetilde{X}}(\log\widetilde{\Dd}^{E}) \ar[d]\ar[r] & \Omega^1_{\widetilde{X}}(\log\widetilde{\Dd}^{E})\tensor\Oo_{E} \ar[d]\ar[r] & 0. \\
 & &\Oo_{E}(1)\ar[d] & \Oo_{E}\ar[d] & \Oo_{E} \ar[d] \\
 & & 0 & 0 & 0. & 
\end{tikzcd}
\end{equation}
Here, the last vertical exact sequence is obtained by tensoring the middle vertical sequence by $\Oo_E$, because $Tor_{\widetilde{X}}^1(\Oo_E, \Oo_E) \cong \Oo_E(1)$.
Since the map $g$ in (\ref{comm1}) factors through the line bundle $\Oo_{E}(-2)$,
\[
\begin{tikzcd}[
  row sep=small,column sep=small,
  ar symbol/.style = {draw=none,"\textstyle#1" description,sloped},
  isomorphic/.style = {ar symbol={\cong}},
  ]
 0\ar[r] & \Oo_{E}(1)\ar[r] & \Omega^1_{\widetilde{X}}(\log\widetilde{\Dd})\tensor\Oo_{E} \ar[d, isomorphic]\ar[rr] & & \Omega^1_{\widetilde{X}}(\log\widetilde{\Dd}^{E})\tensor\Oo_{E} \ar[d, isomorphic]\ar[r] & \Oo_{E} \ar[r] & 0. \\
 & & \Oo_{E}(1)\oplus\Oo_{E}(-2)\ar[dr, two heads] & & \Oo_{E}(-1)\oplus\Oo_{E}(-1) & & \\
 & & &\Oo_{E}(-2)\ar[ur, hook] & & &
\end{tikzcd}
\]
so that $\mathrm{Im}(g)=\Oo_E(-2)$, we get another commutative diagram
\begin{equation}\label{comm2}
\begin{tikzcd}[
  row sep=small,
  ar symbol/.style = {draw=none,"\textstyle#1" description,sloped},
  isomorphic/.style = {ar symbol={\cong}},
  ]
  & &0\ar[d] & 0 \ar[d] &  \\
  & & \Omega^1_{\widetilde{X}}(\log\widetilde{\Dd}) \ar[d]\ar[r, "g\circ f"] & \Oo_{E}(-2) \ar[d]\ar[r] & 0 \\
  0\ar[r] & \Omega^1_{\widetilde{X}}(\log\widetilde{\Dd}^{E})(-E) \ar[r] & \Omega^1_{\widetilde{X}}(\log\widetilde{\Dd}^{E}) \ar[d]\ar[r] & \Omega^1_{\widetilde{X}}(\log\widetilde{\Dd}^{E})\tensor\Oo_{E} \ar[d]\ar[r] & 0. \\
  & & \Oo_{E}\ar[d]\ar[r, isomorphic] & \Oo_{E} \ar[d] \\
  & & 0 & 0 & 
\end{tikzcd}
\end{equation}
Then, by the Snake Lemma, we have $\ker(g \circ f)\cong\Omega^1_{\widetilde{X}}(\log\widetilde{\Dd}^{E})(-E)$. On the other hand, since $Z\intsec C_i=\emptyset$, we have that $\widetilde{C_i}\cap E_i=\emptyset$ for any $i$. Thus by \cite[Proposition 3.2.(iv)]{deligne2006equations}, there exist a morphism 
\[
\psi : \quad \eta^{*}\Omega^1_{X}(\log\Dd) \longrightarrow \Omega^1_{\widetilde{X}}(\log\widetilde{\Dd}).
\]
In fact, this morphism is injective because $\eta^{*}\Omega^1_{X}(\log\Dd)$ is locally free and $\psi$ has full rank. Setting $\Qq :=\mathrm{coker}(\psi)$, we have a commutative diagram 
\begin{equation}\label{commm3}
\begin{tikzcd}[
  row sep=small,
  ar symbol/.style = {draw=none,"\textstyle#1" description,sloped},
  isomorphic/.style = {ar symbol={\cong}},
  ]
 & & 0\ar[d] &0\ar[d] & 0 \ar[d] &  \\
 & 0\ar[r] & \eta^{*}\Omega^1_{X} \ar[d]\ar[r] & \Omega^1_{\widetilde{X}} \ar[d]\ar[r] & \Oo_{E}(-2) \ar[d, equal]\ar[r] & 0 \\
 & 0\ar[r] & \eta^{*}\Omega^1_{X}(\log\Dd) \ar[d]\ar[r] & \Omega^1_{\widetilde{X}}(\log\widetilde{\Dd}) \ar[d]\ar[r,"\phi"] & \Qq \ar[r] & 0. \\
 & &\bigoplus_{i=1}^{m}\eta^{*}\Oo_{C_{i}}\ar[d]\ar[r, isomorphic] & \bigoplus_{i=1}^{m}\Oo_{\widetilde{C}_i}\ar[d] & \\
 & & 0 & 0 
\end{tikzcd}
\end{equation} 
Note that any non-trivial map from $\Omega_{\widetilde{X}}^1(\log \widetilde{\Dd})$ to $\Oo_E(-2)$ factors through 
\[
\left(\Omega_{\widetilde{X}}^1(\log \widetilde{\Dd})\right)_{|E} \cong \Oo_E(1)\oplus \Oo_E(-2),
\]
and so it must be unique up to constant. Thus we get $g \circ f=\phi$ and so 
\[
\eta^{*}\Omega^1_{X}(\log\Dd)\cong\ker(\phi) \cong\Omega^1_{\widetilde{X}}(\log\widetilde{\Dd}^{E})(-E). \qedhere
\]
\end{proof}

\begin{corollary}\label{maincor}
If $\Dd$ is a Torelli arrangement on $X$ with $Z \cap C_i=\emptyset$, then so are $\widetilde{\Dd}$ and $\widetilde{\Dd}^E$ on $\widetilde{X}$.
\end{corollary}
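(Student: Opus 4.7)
The plan is to reduce both parts to the Torelli hypothesis for $\Dd$ on $X$ by reconstructing $\Omega^1_X(\log\Dd)$ from the given logarithmic bundle on $\widetilde{X}$, applying Torelli to recover $C_1,\ldots,C_m$, and then obtaining the arrangement on $\widetilde{X}$ by taking strict transforms together with the intrinsic components $E_1,\ldots,E_k$.

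For $\widetilde{\Dd}^E$ the reconstruction follows immediately from Theorem \ref{main}. Since $E$ is intrinsic to $\widetilde{X}$, I would twist the given bundle by $\Oo_{\widetilde{X}}(-E)$ to obtain $\eta^{*}\Omega^1_X(\log\Dd)$ and apply $\eta_{*}$; the projection formula together with $\eta_{*}\Oo_{\widetilde{X}}=\Oo_X$ then returns $\Omega^1_X(\log\Dd)$.

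For $\widetilde{\Dd}$ the theorem does not apply directly. Here I would push $\Omega^1_{\widetilde{X}}(\log\widetilde{\Dd})$ forward and compare it with $\eta_{*}\Omega^1_{\widetilde{X}}(\log\widetilde{\Dd}^E)$ via the sub-arrangement sequence
\[
0\to\Omega^1_{\widetilde{X}}(\log\widetilde{\Dd})\to\Omega^1_{\widetilde{X}}(\log\widetilde{\Dd}^E)\to\Oo_E\to 0.
\]
The restriction $\Omega^1_{\widetilde{X}}(\log\widetilde{\Dd}^E)|_E\cong\Oo_E(-1)^{\oplus 2}$ established in the proof of Theorem \ref{main} implies $\eta_{*}\Omega^1_{\widetilde{X}}(\log\widetilde{\Dd}^E)\cong\Omega^1_X(\log\Dd)$, so pushing the sequence forward produces an inclusion $\eta_{*}\Omega^1_{\widetilde{X}}(\log\widetilde{\Dd})\hookrightarrow\Omega^1_X(\log\Dd)$ whose cokernel is contained in the skyscraper sheaf $\eta_{*}\Oo_E$ supported on $Z$. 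I would then argue this cokernel vanishes: the hypothesis $Z\cap C_i=\emptyset$ forces $\Dd$ to be empty near each $p_j\in Z$ and $\widetilde{\Dd}$ to be empty in a neighborhood of each $E_j$, so locally $\Omega^1_X(\log\Dd)=\Omega^1_X$, and its sections pull back to regular $1$-forms on $\widetilde{X}$ with zero residue along $E_j$. Hence the connecting residue map $\Omega^1_X(\log\Dd)\to\eta_{*}\Oo_E$ is identically zero and $\eta_{*}\Omega^1_{\widetilde{X}}(\log\widetilde{\Dd})\cong\Omega^1_X(\log\Dd)$.

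With $\Omega^1_X(\log\Dd)$ reconstructed in either case, the Torelli hypothesis for $\Dd$ returns the curves $C_1,\ldots,C_m$; the strict transforms $\widetilde{C}_i=\eta^{-1}(C_i)$ are then uniquely determined since $Z\cap C_i=\emptyset$, and adjoining the intrinsic exceptional components $E_1,\ldots,E_k$ yields $\widetilde{\Dd}^E$. The chief technical obstacle I anticipate is the vanishing of the connecting residue map in the $\widetilde{\Dd}$ case, which rests on the local geometry of the blow-up and on the disjointness of $\widetilde{\Dd}$ from the exceptional divisor imposed by the hypothesis.
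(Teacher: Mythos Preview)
Your argument is correct, and for $\widetilde{\Dd}^E$ it coincides with the paper's. For $\widetilde{\Dd}$, however, the paper takes a shorter path: instead of pushing forward the sub-arrangement sequence $0\to\Omega^1_{\widetilde{X}}(\log\widetilde{\Dd})\to\Omega^1_{\widetilde{X}}(\log\widetilde{\Dd}^E)\to\Oo_E\to 0$, it pushes forward the sequence
\[
0\to\eta^{*}\Omega^1_X(\log\Dd)\to\Omega^1_{\widetilde{X}}(\log\widetilde{\Dd})\to\Oo_E(-2)\to 0
\]
extracted from the diagram in the proof of Theorem~\ref{main}. Since $\eta_{*}\Oo_E(-2)=0$ by the Theorem on Formal Functions, the isomorphism $\eta_{*}\Omega^1_{\widetilde{X}}(\log\widetilde{\Dd})\cong\Omega^1_X(\log\Dd)$ drops out immediately, with no need to analyze a residue map. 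Your route instead confronts a cokernel whose pushforward $\eta_{*}\Oo_E\cong\bigoplus_j\Oo_{p_j}$ is genuinely nonzero, forcing you to argue that the induced map $\Omega^1_X(\log\Dd)\to\eta_{*}\Oo_E$ vanishes; your local computation (pulled-back regular $1$-forms have zero residue along $E_j$) does establish this, so the argument goes through. The trade-off: the paper's choice of sequence makes the vanishing automatic at the cost of relying on a byproduct of the main theorem's proof, while your approach is more self-contained but requires the extra residue step.
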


\begin{proof}
Applying the push-forward functor $\eta_*$ to the second horizontal sequence of (\ref{commm3}), one can get an exact sequence
\[
0\to \Omega_X^1(\log \Dd) \cong \eta_* \eta^* \Omega_X^1(\log \Dd) \to \eta_* \Omega_{\widetilde{X}}^1(\log \widetilde{\Dd}) \to \eta_* \Oo_E(-2) \cong 0,
\]
by the projection formula together with $\eta_* \Oo_{\widetilde{X}}\cong \Oo_X$. The last vanishing comes from the Theorem on Formal Functions in \cite[Section III.11]{hartshorne2013algebraic}. Thus we get $\eta_*\Omega_{\widetilde{X}}^1(\log \widetilde{\Dd}) \cong \Omega_X^1(\log \Dd)$ and so $\widetilde{\Dd}$ is a Torelli arrangement. On the other hand, by Theorem \ref{main} we get
\[
\Omega_X^1(\log \Dd) \cong \eta_* \left(\Omega_{\widetilde{X}}^1(\log \widetilde{\Dd}^E)(-E)\right)
\]
and so $\widetilde{\Dd}^E$ is also a Torelli arrangement. 
\end{proof}

\begin{example}\label{deg1}
Let $\eta:\FF=\mathrm{Bl}_{p}\PP^2\rightarrow\PP^2$ be the blow-up of $\PP^2$ at a point $p\in \PP^2$ with the exceptional divisor $E$. For an arrangement $\Ll=\left\{L_1,L_2,\ldots,L_m\right\}$ of $m$ distinct lines in $\PP^2$ with simple normal crossings, we let $\widetilde{L}_i$ be the strict transform of $L_i$ for each $i$ to consider an arrangement $\widetilde{\Ll}=\{\widetilde{L}_1, \dots, \widetilde{L}_m\}$. Assume now that $p\not\in L_i$ for each $i$ so that $\widetilde{L}_i\in |\eta^*\Oo_{\PP^2}(1)|$. Setting $\widetilde{\Ll}^{E}:=\widetilde{\Ll}\cup\{E\}$ as before, we have
\[
\Omega^1_{\FF}(\log \widetilde{\Ll}^{E})\cong\left(\eta^*\Omega^1_{\PP^2}(\log\Ll)\right)(E)
\]
by Theorem \ref{main}. In particular, by \cite[Proposition 2.10]{dolgachev1993arrangements}, we obtain the follwing for $m\le 3$:
\[
\Omega^1_{\FF}(\log \widetilde{\Ll}^{E})\cong \left\{
\begin{array}{ll}
  \Oo_{\FF}(-H+E)^{\oplus 2} &\hbox{   ($m=1$)}\vspace{1mm}\\
  \Oo_{\FF}(E)\oplus\Oo_{\FF}(-H+E) &\hbox{    ($m=2$)}\vspace{1mm}\\
  \Oo_{\FF}(E)^{\oplus 2} &\hbox{    ($m=3$)},
\end{array}
\right.
\]
where $\Oo_{\FF}(H)\cong \eta^{*}\Oo_{\PP^2}(1)$. 
For $m=4$ we have 
\[
\Omega^1_{\FF}(\log\widetilde{\Ll}^{E})\cong\eta^{*}\left(\Tt_{\PP^2}(-1)\right)(E).
\]
On the other hand, for $m\geq 5$, by Corollary \ref{maincor} and \cite{dolgachev1993arrangements,valles2000nombre} the arrangements $\widetilde{\Ll}$ and $\widetilde{\Ll}^{E}$ are Torelli, unless $L_i$'s osculate any smooth conic in $\PP^2$. Note that in the case $m=5$, it cannot be a Torelli arrangement because the given five lines in general position always determine a conic that mutually tangent to the five lines. 
\end{example}

\begin{example}
Let $\widetilde{C}\in\left|\eta^{*}\Oo_{\PP^2}(2)\right|$ be a smooth irreducible curve which corresponds to a smooth conic $C$ in $\PP^2$ and set $\widetilde{\Cc}=\{\widetilde{C}\}$. It was shown in \cite{angelini2014logarithmic} that $\Omega^1_{\PP^2}(\log C)$ is isomorphic to $\Tt_{\PP^2}(-2)\cong\Omega^1_{\PP^2}(1)$. Thus by Theorem \ref{main} we get $\Omega^1_{\FF}(\log\widetilde{\Cc}^{E})\cong \left(\eta^{*}\Omega^1_{\PP^2}\right)(H+E)$.
\end{example}

Now let $\Cc=\{C_1,\ldots,C_m\}$ be an arrangement of $m$ smooth irreducible curves with simple normal crossings in $\PP^2$, where $C_i\in|\Oo_{\PP^2}(d_i)|$ for each $i$. We will show that if $\Omega^1_{\PP^2}(\log\Cc)$ is stable, then $\Omega^1_{\FF}(\log\widetilde{\Cc}^{E})$ is an element of the moduli space $\mathbf{M}_{\FF}(c_1,c_2)$ of stable sheaves of rank two with its Chern classes
\[
    c_1=\left(-3+\sum_{i=1}^{m}d_i\right)H+2E \quad\text{and}\quad c_2=\sum_{i=1}^{m}{d_i}^2+\sum_{i<j}d_{i}d_{j}-3\sum_{i=1}^{m}d_i +2
\]
with respect to the anticanonical polarization $-K_{\FF}=3H-E$. After then we discuss the generic injectivity of a map
\[
\Phi:\quad \mathbf{F} \longrightarrow \mathbf{M}_{\FF}(c_1, c_2),
\]
where $\mathbf{F}$ denotes the family of arrangements $\widetilde{\Cc}^{E}$ in $\FF$. We note that the Chern classes of $\Omega^1_{\PP^2}(\log\Cc)$ can be obtained by using the resolution of \cite[Theorem 4.3] {angelini2014logarithmic} for $n=2$ :
\begin{equation}\label{Ancona}
0 \to \Dirsum_{i=1}^{m}\Oo_{\PP^2}(-d_i)\to \Oo_{\PP^2}^{\dirsum(m-1)}\dirsum\Oo_{\PP^2}(-1)^{\dirsum 3}\to \Omega^1_{\PP^2}(\log\Cc)\to 0,    
\end{equation}
that is 
\[
   c_1(\Omega^1_{\PP^2}(\log\Cc))=\sum_{i=1}^{m}d_i-3, \quad\text{and}\quad c_2(\Omega^1_{\PP^2}(\log\Cc))=\sum_{i=1}^{m}{d_i}^2+\sum_{i<j}d_{i}d_{j}-3\sum_{i=1}^{m}d_i +3
\]
and then 
\[
c_1(\Omega^1_{\FF}(\log\widetilde{\Cc}^{E})=\eta^{*}c_1(\Omega^1_{\PP^2}(\log\Cc))+2E \;\;\;\;\text{and}\;\;\;\; c_2(\Omega^1_{\FF}(\log\widetilde{\Cc}^{E})
=\eta^{*}c_2(\Omega^1_{\PP^2}(\log\Cc))-1.
\]

\begin{proposition}\label{gstab}
 If $\Omega^1_{\PP^2}(\log\Cc)$ is stable then $\Omega^1_{\FF}(\log\widetilde{\Cc}^{E})$ is $(-K_{\FF})$-stable. 
\end{proposition}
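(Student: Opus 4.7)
The plan is to use Theorem~\ref{main} to rewrite $\mathcal{E}:=\Omega^{1}_{\FF}(\log\widetilde{\Cc}^{E})$ as $\eta^{*}\mathcal{F}(E)$, where $\mathcal{F}:=\Omega^{1}_{\PP^{2}}(\log\Cc)$. Since tensoring with a line bundle preserves $\mu$-stability, it suffices to prove that $\eta^{*}\mathcal{F}$ is $(-K_{\FF})$-stable whenever $\mathcal{F}$ is $\mu_{\Oo(1)}$-stable on $\PP^{2}$.

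Suppose to the contrary that $\Ll\hookrightarrow \eta^{*}\mathcal{F}$ is a saturated destabilizing rank-one subsheaf. As $\FF$ is a smooth surface, $\Ll$ is reflexive of rank one, hence a line bundle of the form $\Oo_{\FF}(aH+cE)$, and the destabilizing slope inequality reads $3a+c\geq \tfrac{3}{2}(\sum d_i-3)$. The first step is to push the inclusion forward along $\eta$: using the projection formula together with $\eta_{*}\Oo_{\FF}(cE)\cong \Oo_{\PP^{2}}$ for $c\geq 0$, $\eta_{*}\Oo_{\FF}(cE)\cong \Ii_{p}^{-c}$ for $c\leq 0$, and $\eta_{*}\eta^{*}\mathcal{F}\cong \mathcal{F}$, we obtain a rank-one inclusion of $\eta_{*}\Ll$ into $\mathcal{F}$ whose saturation is a line bundle $\Oo_{\PP^{2}}(a')$ with $a'\geq a$. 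Stability of $\mathcal{F}$ then forces $a\leq a'<\tfrac{1}{2}(\sum d_i-3)$, and combining this with the destabilizing inequality yields $c\geq 2$.

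Next I translate the injection into a global section. Since $R^{1}\eta_{*}\Oo_{\FF}(-cE)=0$ for $c\geq 0$, the projection formula identifies $H^{0}(\FF,\eta^{*}\mathcal{F}\otimes \Ll^{-1})$ with $H^{0}(\PP^{2},\mathcal{F}(-a)\otimes \Ii_{p}^{c})$: a nonzero element corresponds to a section of $\mathcal{F}(-a)$ vanishing at the blow-up centre $p$ to order at least $c$. Factoring such a section as $s=D\cdot s'$ with $D\in H^{0}(\Oo_{\PP^{2}}(a'-a))$ the divisorial base locus and $s'\in H^{0}(\mathcal{F}(-a'))$ saturated, I use the intersection estimate that a saturated section of a rank-two bundle vanishing to order $k$ at a smooth point contributes at least $k^{2}$ to its zero scheme, whose length equals $c_{2}(\mathcal{F}(-a'))$. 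Combined with $c\leq \ord_{p}D+k\leq (a'-a)+k$, this gives
\[
\bigl(c-(a'-a)\bigr)^{2}\;\leq\; c_{2}(\mathcal{F})-(\sum d_i-3)\,a'+(a')^{2}.
\]
Substituting the explicit Chern class values of $\mathcal{F}$ from the Ancona resolution~\eqref{Ancona} and the bounds on $(a,a',c)$ from the previous step should contradict this inequality.

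The main obstacle is precisely this final Chern-class bookkeeping, which can be tight in borderline configurations (for example, for small $\sum d_i$). To close these, one may refine the estimate by invoking the Poincar\'e residue sequence $0\to \Omega^{1}_{\PP^{2}}\to \mathcal{F}\to \bigoplus_{i=1}^{m}\Oo_{C_i}\to 0$, which constrains the possible behaviour at $p$ of a logarithmic section and thereby eliminates the remaining borderline cases.
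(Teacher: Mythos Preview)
Your opening move---rewriting $\Omega^{1}_{\FF}(\log\widetilde{\Cc}^{E})$ as $\eta^{*}\mathcal{F}(E)$ via Theorem~\ref{main} and pushing a would-be destabilising line bundle $\Oo_{\FF}(aH+cE)$ down to $\PP^{2}$---is exactly what the paper does, and it correctly yields $2a\le \delta-4$ with $\delta=\sum d_i$. The divergence, and the gap, is in your second step.

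Your $c_2$--bound does not close the argument. Take the simplest nontrivial case $\delta=6$ (six general lines). Then $c_2(\mathcal{F})=6$, stability forces $a\le 1$, and your inequality $(c-(a'-a))^{2}\le c_2(\mathcal{F}(-a'))$ with $a=a'=1$ gives only $c^{2}\le 4$, i.e.\ $c\le 2$. Since the destabilising condition $3a+c\ge \tfrac{3\delta-9}{2}$ requires merely $c\ge 2$, the pair $(a,c)=(1,2)$ survives both constraints. The phenomenon persists for all $\delta$: with $a=\lfloor(\delta-4)/2\rfloor$ one computes $c_2(\mathcal{F}(-a))\approx (\delta/2)^{2}$, so the $c_2$--bound allows $c$ up to roughly $\delta/2$, far larger than the $c\ge 2$ needed to destabilise. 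Your proposed fallback (``invoke the Poincar\'e residue sequence to constrain behaviour at $p$'') is not a proof; the residue sequence gives no pointwise control strong enough to exclude these cases.

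The paper's second ingredient is different and is what you are missing: restrict the inclusion $\Oo_{\FF}(aH+bE)\hookrightarrow \Omega^{1}_{\FF}(\log\widetilde{\Cc}^{E})$ to a \emph{generic fibre} $f$ of $\pi:\FF\to\PP^{1}$. A fibre maps isomorphically to a line $L$ through $p$, and since $\mathcal{F}$ is stable on $\PP^{2}$, Grauert--M\"ulich forces $\mathcal{F}_{|L}$ to be balanced; after the twist by $E$ this gives $\Omega^{1}_{\FF}(\log\widetilde{\Cc}^{E})_{|f}\cong \Oo_f(\lceil\delta/2\rceil)\oplus\Oo_f(\lfloor(\delta-2)/2\rfloor)$. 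Restricting the destabilising map to $f$ then yields $a+b\le \delta/2$, and combining with $2a\le\delta-4$ gives $3a+b\le \tfrac{3\delta-8}{2}<\tfrac{3\delta-7}{2}$, the desired contradiction. In your coordinates this is the bound $a+c\le \tfrac{\delta}{2}-1$, which immediately kills $(a,c)=(1,2)$ in the example above. The moral is that control of $\eta^{*}\mathcal{F}$ near $E$ comes from the splitting type of $\mathcal{F}$ on lines through $p$, not from a global $c_2$ estimate.
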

\begin{proof}
Assuming that $\Omega^1_{\PP^2}(\log\Cc)$ is stable, suppose that $\Omega^1_{\FF}(\log\widetilde{\Cc}^{E})$ is not $(-K_{\FF})$-stable and let $\Oo_{\FF}(aH+bE)$ be a destabilizing line bundle
\begin{equation}\label{destab}
\Oo_{\FF}(aH+bE) \hookrightarrow \Omega^1_{\FF}(\log\widetilde{\Cc}^{E}).
\end{equation}
Setting $\delta:=\sum_{i=1}^{m}d_i$, we should have
\[
\mu(\Oo_{\FF}(aH+bE)) = 3a+b \ge \frac{3\delta-7}{2} = \mu\left(\Omega^1_{\FF}(\log\widetilde{\Cc}^{E})\right).
\]
Applying the push-forward functor $\eta_{*}$ to \eqref{destab}, we get
\[
\Ii_{Z, \PP^2}(a) \hookrightarrow \eta_{*}\Omega^1_{\FF}(\log\widetilde{\Cc}^{E})\cong\Omega^1_{\PP^2}(\log\Cc)
\]
for some zero-dimensional subscheme $Z$ supporting $p$; possibly $Z=\emptyset$. Since $\Omega^1_{\PP^2}(\log\Cc)$ is stable, we get $2a\leq\delta-4$. On the other hand, we can take a general fibre $f$ in $\FF$ such that the splitting of $\Omega_{\PP^2}^1(\log \Cc)$ is generic over the line $\eta(f)=L$. To be explicit, 
\[
\left(\Omega^1_{\PP^2}(\log\Cc)\right)_{|L}\cong\left\{
\begin{array}{ll}
  \Oo_{L}\left(\frac{\delta-3}{2}\right)^{\oplus 2} &\hbox{($\delta$ : odd)}\vspace{2mm}\\
  \Oo_{L}\left(\frac{\delta-2}{2}\right)\oplus\Oo_{L}\left(\frac{\delta-4}{2}\right) &\hbox{($\delta$ : even)}
\end{array}
\right.
\]
By \cite[Remark III.9.3.1]{hartshorne2013algebraic}, we have a natural map 
\[
\left(\Omega^1_{\PP^2}(\log\Cc)\right)_{|L} \longrightarrow \eta_* \left(\Omega^1_{\FF}(\log\widetilde{\Cc}^{E})_{|f}\right). 
\]
Clearly, this map has full rank and so it is injective. Thus we get
\[
\Omega^1_{\FF}(\log\widetilde{\Cc}^{E})_{|f}\cong\left\{
\begin{array}{ll}
  \Oo_{f}\left(\frac{\delta-1}{2}\right)^{\oplus 2} &\hbox{($\delta$ : odd)}\vspace{2mm}\\
  \Oo_{f}\left(\frac{\delta}{2}\right)\oplus\Oo_{f}\left(\frac{\delta-2}{2}\right) &\hbox{($\delta$ : even)}
\end{array}
\right.
\]
for a generic fibre $f$. In particular, the restriction of the map \eqref{destab} to $f$ gives $a+b\leq \frac{\delta}{2}$ in either case. Consequently, we have
\[
3a+b=2a+(a+b)\leq\frac{3\delta-8}{2}<\frac{3\delta-7}{2},
\]
a contradiction.
\end{proof}

For the stability of $\Omega^1_{\PP^2}(\log\Cc)$, we have the following lemma. 

\begin{lemma}
Let $\Cc=\{C_1,\ldots,C_m\}$ be an arrangement of $m$ smooth irreducible curves with simple normal crossings and $C_i\in |\Oo_{\PP^2}(d_i)|$ for each $i$. We may assume that $d_1\geq d_2\geq\dots\geq d_m$ without loss of generality. Then $\Omega^1_{\PP^2}(\log\Cc)$ is stable if and only if the corresponding $(m+1)$-tuple $(m; d_1, \dots, d_m)$ is not contained in the set
\[
\mathbb{S}=\left\{ ~(1;1) ,~(2;1,1) ,~ (2;2,1) ,~(3;1,1,1)~\right\}.
\]
\end{lemma}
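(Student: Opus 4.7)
The plan is to deduce stability of $E := \Omega^1_{\PP^2}(\log \Cc)$ directly from the Ancona resolution \eqref{Ancona}, applying the standard cohomological criterion: a rank $2$ bundle on $\PP^2$ is $\mu$-stable with respect to $\Oo_{\PP^2}(1)$ if and only if $H^0(E(-a))=0$ for every integer $a\ge a_0:=\lceil c_1(E)/2\rceil$. In our setting $c_1(E)=\delta-3$ with $\delta:=\sum_{i=1}^{m} d_i$, so the proof reduces to computing the relevant $h^0$'s from the resolution and matching the non-vanishing tuples against $\SS$.

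Setting $F:=\bigoplus_i\Oo_{\PP^2}(-d_i)$ and $G:=\Oo_{\PP^2}^{\oplus(m-1)}\oplus\Oo_{\PP^2}(-1)^{\oplus 3}$, I would twist \eqref{Ancona} by $\Oo_{\PP^2}(-a)$ and pass to cohomology. Since $H^1(\Oo_{\PP^2}(n))=0$ for every $n$, and the induced map $H^0(F(-a))\hookrightarrow H^0(G(-a))$ is automatically injective (the preceding term in the long exact sequence vanishes), one obtains
\[
h^0(E(-a)) \;=\; h^0(G(-a)) - h^0(F(-a)).
\]
For every $a\ge 1$ all summands of both $F(-a)$ and $G(-a)$ are line bundles of strictly negative degree, so both $h^0$'s vanish and $h^0(E(-a))=0$. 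Consequently stability is automatic whenever $a_0\ge 1$, that is, whenever $\delta\ge 4$.

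It then remains to inspect $\delta\in\{1,2,3\}$, for which $a_0\in\{-1,0,0\}$. At $a=0$ one reads off $h^0(F)=0$ (since $d_i\ge 1$) and $h^0(G)=m-1$, hence $h^0(E)=m-1$. For $\delta\in\{2,3\}$ this forces stability to fail precisely when $m\ge 2$, which isolates the three tuples $(2;1,1)$, $(2;2,1)$, $(3;1,1,1)$ as non-stable and leaves $(1;2)$ and $(1;3)$ as the only stable possibilities in this range. For $\delta=1$ the only arrangement is $(1;1)$, and the same bookkeeping at $a=-1$ with $F(1)\cong\Oo_{\PP^2}$ and $G(1)\cong\Oo_{\PP^2}^{\oplus 3}$ yields $h^0(E(1))=2\ne 0$, so $(1;1)$ is also non-stable. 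Collecting the non-stable cases reproduces exactly $\SS$.

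Once \eqref{Ancona} is invoked the argument is essentially bookkeeping, so I do not foresee a serious obstacle. The point that must be handled carefully is the translation of the slope inequality $a<c_1(E)/2$ into the correct integer threshold $a_0$ depending on the parity of $c_1(E)$; this is what pinpoints $\delta\le 3$ as the only range requiring individual inspection and ensures that the boundary cases at $a=0$ and $a=-1$ are treated via the correct line bundles.
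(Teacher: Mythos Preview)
Your argument is correct and is essentially the paper's own proof: both twist the Ancona resolution \eqref{Ancona} to the normalization degree and apply the $H^0$-vanishing criterion for stability of rank~$2$ bundles on $\PP^2$, reducing the question to exactly the same finite check at $\delta\le 3$. The only cosmetic difference is that the paper phrases the twist via the normalized bundle $\Ff=\Omega^1_{\PP^2}(\log\Cc)(k)$ with $k=-a_0$, whereas you work directly with the threshold $a_0=\lceil c_1/2\rceil$; the computations and the case analysis are identical.
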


\begin{proof}
    Let $\Ff:=\Omega^1_{\PP^2}(\log\Cc)(k)$ be a \textit{normalized bundle} of $\Omega^1_{\PP^2}(\log\Cc)$ with 

    \[
    k = \begin{cases}
         \quad-\cfrac{c_1(\Omega^1_{\PP^2}(\log\Cc))}{2} & \quad\text{if $c_1(\Omega^1_{\PP^2}(\log\Cc))$ is even}\vspace{2mm} \\ 
         \quad-\cfrac{c_1(\Omega^1_{\PP^2}(\log\Cc))+1}{2} & \quad\text{if $c_1(\Omega^1_{\PP^2}(\log\Cc))$ is odd}.
    \end{cases}
    \]
Then using the sequence \eqref{Ancona}, we get a long exact sequence of cohomology
\[
0 \to \mathrm{H}^0\left(\Dirsum_{i=1}^{m}\Oo_{\PP^2}(-d_{i}+k)\right) \to \mathrm{H}^0\left(\Oo_{\PP^2}(k)^{\dirsum(m-1)}\dirsum\Oo_{\PP^2}(-1+k)^{\dirsum 3}\right) \to \mathrm{H}^0\left(\Ff\right) \to 0
\]
Note that the first term $\mathrm{H}^0(\Dirsum_{i=1}^{m}\Oo_{\PP^2}(-d_{i}+k))$ vanishes except the case $(1;1)$, while the midterm $\mathrm{H}^0(\Oo_{\PP^2}(k)^{\dirsum(m-1)})\dirsum\mathrm{H}^0( \Oo_{\PP^2}(-1+k)^{\dirsum 3})$ vanishes except all the cases in $\mathbb{S}$. Conversely, $\mathrm{H}^0(\Ff)$ does not vanish for all the cases in $\mathbb{S}$. Therefore, the arrangement $\Cc$ does not correspond to one of the cases in $\mathbb{S}$ if and only if $\mathrm{H}^0(\Ff)=0$, which is again equivalent to the stability of $\Omega^1_{\PP^2}(\log\Cc)$ by \cite[Ch.2, Lemma 1.2.5]{okonek1980vector}. Moreover, we remark that $\Omega^1_{\PP^2}(\log\Cc)$ is semistable for the case $(2;2,1)$, while the bundle is a direct sum of line bundles in the other cases of $\mathbb{S}$.
\end{proof}


\begin{corollary}
Let $\Cc=\{C_1,\ldots,C_m\}$ be an arrangement of $m$ smooth irreducible curves with simple normal crossings and $C_i\in |\Oo_{\PP^2}(d)|$ for each $i$. Set $\mathbf{F}$ the family of arrangements $\widetilde{\Cc}^{E}$ and assume that $m\geq \binom{2+d}{d}+3$. Then we get a generically injective map
\[
    \Phi:\quad \mathbf{F} \longrightarrow \mathbf{M}_{\FF}(c_1, c_2),
    \]
    where $\mathbf{M}_{\FF}(c_1, c_2)$ is the moduli space of stable sheaves of rank two with Chern classes 
    \[
    c_1=(md-3)H+2E \quad\text{and}\quad c_2=\frac{d^2m^2+d(d-6)m+4}{2}
    \]
    with respect to the anticanonical polarization $-K_{\FF}=3H-E$. 
\end{corollary}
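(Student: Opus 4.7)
The plan is to reduce the corollary to three earlier ingredients: Angelini's Torelli theorem for arrangements of equal-degree hypersurfaces in \cite{angelini2014logarithmic}, our Corollary \ref{maincor}, and our Proposition \ref{gstab}. First, I would invoke \cite{angelini2014logarithmic} in the case $n=2$: for a general arrangement $\Cc=\{C_1,\ldots,C_m\}$ of smooth plane curves all of degree $d$ with $m\geq\binom{2+d}{d}+3$, the associated hyperplane arrangement in $\PP^N$ (with $N=\binom{2+d}{d}-1$) obtained via the degree-$d$ Veronese embedding does not osculate any rational normal curve of degree $N$, so $\Cc$ is a Torelli arrangement on $\PP^2$. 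Under the same hypothesis the tuple $(m;d,\ldots,d)$ clearly avoids the finite exceptional set $\SS$ of the preceding lemma (the minimal case $d=1$ forces $m\geq 6$, and $d\geq 2$ forces $m\geq 9$), so $\Omega^1_{\PP^2}(\log\Cc)$ is in addition stable.

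Second, since $p\in\PP^2$ is chosen once and for all and each $C_i$ moves in a positive-dimensional linear system, the condition $p\notin C_i$ for every $i$ is open and dense in $\mathbf{F}$. Restricting to this open locus, Corollary \ref{maincor} applies and shows that $\widetilde{\Cc}^E$ is a Torelli arrangement on $\FF$; this is precisely the statement that the map $\Phi$ is injective on a Zariski-dense open subset of $\mathbf{F}$.

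Third, by Proposition \ref{gstab} the stability of $\Omega^1_{\PP^2}(\log\Cc)$ propagates to $(-K_{\FF})$-stability of $\Omega^1_{\FF}(\log\widetilde{\Cc}^E)$, which places the image of $\Phi$ inside the announced moduli space. The explicit Chern classes follow from specializing $d_1=\cdots=d_m=d$ in the formulas recorded just before Proposition \ref{gstab}: this yields
\[
c_1=(md-3)H+2E,\qquad c_2=\frac{d^2m^2+d(d-6)m+4}{2},
\]
after simplification of $\sum d_i^2+\sum_{i<j}d_id_j-3\sum d_i+2=\binom{m}{2}d^2+md^2-3md+2$.

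I do not anticipate a serious obstacle: the argument is genuinely a synthesis of the three earlier results, and the only delicate point is checking that the two genericity conditions needed at once — namely $p\notin\bigcup C_i$ and the non-osculation condition of \cite{angelini2014logarithmic,valles2000nombre} — can be imposed simultaneously on a Zariski-dense open subset of $\mathbf{F}$. This is immediate since both conditions are open and each is non-empty (the non-osculation locus is non-empty by Angelini's theorem, and the condition on $p$ cuts out only finitely many divisors from $\PP^2\times\mathbf{F}$). The remaining work is the routine arithmetic that produces the stated closed form for $c_2$.
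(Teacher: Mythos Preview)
Your proposal is correct and follows essentially the same approach as the paper: invoke the Torelli results of Vall\`es (for $d=1$) and Angelini (for $d\geq 2$) on $\PP^2$, then transfer to $\FF$ via Corollary~\ref{maincor}, with the well-definedness of $\Phi$ into the moduli space coming from Proposition~\ref{gstab} and the Chern class computation preceding it. The paper's own proof is far terser (two sentences citing the same ingredients), whereas you spell out the stability check via the preceding lemma and the openness of the two genericity conditions, but the substance is the same.
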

\begin{proof}
The generic injectivity comes from \cite[Corollary 3.1]{valles2000nombre} for $d=1$, and \cite[Corollary 5.6]{angelini2014logarithmic} for $d\geq 2$. Then the assertion follows from Corollary \ref{maincor}.
\end{proof}

\begin{remark}
In \cite{HMPV} a general philosophy about the Torelli property on logarithmic vector bundle was suggested: one can get a positive answer to the Torelli property, after sufficiently many blow-ups. This point of view can be reformulated as follows, using Theorem \ref{main}. Let $\mathbf{F}$ be the set of arrangements $\Dd'$ of divisors in the system $|\Oo_X(\Dd)|$ with $\Omega_X^1(\log \Dd') \cong \Omega_X^1(\log \Dd)$. Setting $\eta : \widetilde{X} \rightarrow X$ the blow-up of $X$ along a set $Z=\{p_1, \dots, p_k\}$ of $k$ points in $X$, we get an exact sequence
\[
0\to \Omega_{\widetilde{X}}^1(\log \widetilde{\Dd'}) \to \Omega_{\widetilde{X}}^1(\log \widetilde{\Dd'}^E) \cong \eta^*\Omega_X^1(\log \Dd')(E) \cong \eta^*\Omega_X^1(\log \Dd)(E)\to \Oo_E \to 0
\]
with $E=E_1+\cdots+ E_k$. In particular, the isomorphism class of $\Omega_{\widetilde{X}}^1(\log \widetilde{\Dd'})$
is determined by the surjection from $\eta^*\Omega_X^1(\log \Dd)(E)$ to $\Oo_E$, and so it corresponds to a point in 
\[
\underbrace{\PP^1 \times \dots \times \PP^1}_{k \text{ copies}}\cong \prod_{i=1}^{k} \PP \mathrm{Hom}(\eta^*\Omega_X^1(\log \Dd)(E)_{p_i}, \Oo_{p_i}).
\]
Hence, we get a map 
\[
\Psi :\quad  \mathbf{F} \dashrightarrow \underbrace{\PP^1 \times \dots \times \PP^1}_{k \text{ copies}},
\]
and the general philosophy can be rephrased as follows: the map $\Psi$ can be expected to be generically injective for general $Z$ with sufficiently large $k$. 
\end{remark}

\section{Hirzebruch surfaces}
In this section we will focus on the blown-up plane $\FF=\mathrm{Bl}_{p}\PP^{2}$ of Example \ref{deg1} and consider it as a rational ruled surface, namely the Hirzebruch surface. From this view, we will introduce a methodology to find the canonical exact sequence for a logarithmic vector bundle on $\FF$, introduced in \cite{brosius1983rank}.

Let $\FF_e=\PP(\Oo _{\PP^1}\oplus \Oo _{\PP^1}(-e))$ for $e\ge 0$ be the Hirzebruch surface with minimal self-intersection number $-e$ and its ruling
$\pi : \FF_e \rightarrow \PP^1$. We have $\mathrm{h}^1(\Oo_{\FF_e})=g(\PP^1)=0$, and for $e>0$ we have 
\[
\mathrm{h}^0(\Tt_{\FF_e}) = \dim \mathrm{Aut}(\FF_e) =\dim \mathrm{Aut}(\PP^1) +\dim \End_{\PP^1} (\Oo _{\PP^1}\oplus \Oo _{\PP^1}(-e))-1 =e+5. 
\]
We also have $\mathrm{Pic}(\FF_e)\cong \ZZ^{\oplus 2}\cong \ZZ \langle h,f\rangle$, where $f$ is a fiber of a ruling of $\FF_e$ and $h$ is the section with $h^2=-e$, $h\cdot f=1$ and $f^2=0$.

\begin{remark}
It is well known that for a divisor $D=ah+bf$ a line bundle $\Ll=\Oo_{\FF_e}(D)$ on $\FF_e$ is very ample if and only if it is ample, which is also equivalent to the condition that $a>0$ and $b>ae$. The line bundle $\Ll$ is effective if and only if $a\geq0$ and $b\geq0$. There is also a similar equivalent condition for the linear system $|\Ll|$ to contain an irreducible (smooth) curve, that is either 
\[
\mathrm{(i)}~ D=h, \quad \mathrm{(ii)}~D=f, \quad \mathrm{(iii)}~a>0,~ b>ae,\quad  \text{or} \quad  \mathrm{(iv)}~e>0, ~a>0,~ b=ae.
\]
\end{remark}

Since $\omega _{\FF_e}\cong \Oo _{\FF_e}(-2h-(e+2)f)$, there is a subcanonical polarization of $\FF_e$ if and only if $e=0,1$. Since the ruling $\pi : \FF_e\rightarrow \PP^1$ is a submersion, it induces a surjective map $ \Tt_{\FF_e} \rightarrow  \pi^\ast \Tt_{\PP^1} \cong \Oo _{\FF_e}(2f)$. So from $\omega _{\FF_e}^\vee \cong \Oo _{\FF_e}(2h+(e+2)f)$ we get that $\Tt_{\FF_e}$ fits into an exact sequence
\begin{equation}\label{eqy1}
0 \to \Oo _{\FF_e}(2h+ef) \to \Tt_{\FF_e} \to \Oo _{\FF_e}(2f)\to 0.
\end{equation}

\begin{remark}\label{rmk31}
The Chern classes of $\Tt_{\FF_e}$ is $(c_1, c_2)=(2h+(e+2)f, 4)$. By restricting the sequence (\ref{eqy1}) to $f$, we get that the splitting of $\Tt_{\Ff_e}$ over each fibre $f$ is
\[
\left(\Tt_{\FF_e}\right)_{|f}\cong \Oo_f(2)\oplus \Oo_f.
\]
On the other hand, from the normal exact sequence for $h\subset \FF_e$, we get 
\[
\left(\Tt_{\FF_e}\right)_{|h}\cong \Oo_h(2)\oplus \Oo_h(-e).
\]
\end{remark}

\begin{remark}
In the case $e=1$ the surface $\FF:=\FF_1$ is isomorphic to the blow-up of $\PP^2$ at one point $p$ with $\eta: \FF\cong \mathrm{Bl}_p(\PP^2) \rightarrow \PP^2$. Recall that we already used this notation for the single point blown-up of $\PP^2$ in the previous section. The surface $\FF$ is the del Pezzo surface of degree $8$, and so $\mathrm{h}^0(\Tt_{\FF})=6$. From this point of view, note that a system of curves $\left|\eta^{*}\Oo_{\PP^2}(1)\right|$ is equal to the system $\left|\Oo_{\FF}(h+f)\right|$.
\end{remark}

Let $\Dd=\{D_1, D_2, \ldots, D_m\}$ be an arrangement of smooth irreducible curves  $D_i\in |\Oo_{\FF}(a_ih+b_if)|$ with simple normal crossings on $\FF$ so that $\Dd\in |\Oo_{\FF}(\alpha h+ \beta f)|$ with $\alpha=\sum_{i=1}^{m}a_i$ and $\beta=\sum_{j=1}^{m}b_j$. Then from the Poincar\'e residue sequence for $\Omega^1_\FF(\log\Dd)$ we get
\[
c_1(\Omega^1_\FF(\log\Dd))=(\alpha-2)h+(\beta-3)f\]
and 
\[
c_2(\Omega^1_\FF(\log\Dd))=4-\alpha-2\beta+\sum_{i=1}^m(2a_{i}b_{i}-a_{i}^2)+\sum_{i<j}(a_ib_j+a_jb_i-a_ia_j).
\]

\noindent Ever since Brosius showed in \cite{brosius1983rank} that any vector bundle of rank two on ruled surface can be described by a canonical extension, there have been many studies using similar tools to study vector bundles of rank two on Hirzebruch surface; see \cite{aprodu1996existence} and \cite{brinzanescu1991algebraic}. Throughout this section, we use notations and terminologies of the survey article \cite{aprodu2012rank}. The following result is one of main methodologies that we keep using throughout this section; see \cite{aprodu2012rank}. 

\begin{proposition}\label{AB}
Let $\Ff$ be a vector bundle of rank two on $\FF$. 
\begin{itemize}
\item [(i)] Set $(d, d')\in \ZZ^{\oplus 2}$ with $d\ge d'$ the splitting type of $\Ff$ over a generic fiber $f$. Then $\pi_* \Ff(-dh)$ is a vector bundle of rank $1$ (resp. rank $2$) if $d>d'$ (resp. $d=d'$). 
\item [(ii)] If $d>d'$, set $r:=\deg\pi_{*}\Ff(-dh)$. If $d=d'$, set $\pi_{*}\Ff(-dh)\cong\Oo_{\PP^1}(r)\dirsum\Oo_{\PP^1}(s)$ with $r\geq s$. Then we have
\[
-r=\inf \left\{k \in \ZZ \;\big|\;\exists \Ll\in \Pic^{k}(\PP^1) \;\textrm{with }\;  \mathrm{h}^0(\Ff(-dh)\tensor\pi^{*}\Ll)\ne 0 \right\}.
\]
\end{itemize}
\end{proposition}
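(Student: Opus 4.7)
The plan is to handle (i) and (ii) separately, using the pushforward along the ruling $\pi$ to reduce the problem to cohomology on $\PP^1$. Throughout, set $\Gg:=\Ff(-dh)$; by hypothesis on the splitting type, $\Gg|_f\cong \Oo_f\oplus \Oo_f(d'-d)$ on a general fibre $f$, with $d'-d\le 0$.

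For (i), the first step is to show that $\pi_*\Gg$ is torsion-free on $\PP^1$, from which local freeness follows because $\PP^1$ is a smooth curve. The argument I would use: a local section of $\pi_*\Gg$ on an open $U\subset\PP^1$ is by definition a section of $\Gg$ on $\pi^{-1}(U)$, and if it were annihilated by a function vanishing at some $y\in U$, its restriction to the dense open $\pi^{-1}(U)\setminus f_y$ would be zero; but $\Gg$ is locally free on the smooth surface $\FF$, hence torsion-free as a sheaf, so such a section must vanish identically. Once $\pi_*\Gg$ is locally free, the second step is to compute its rank on the generic fibre: $\mathrm{h}^0(\Gg|_f)=\mathrm{h}^0(\Oo_f)+\mathrm{h}^0(\Oo_f(d'-d))$, which equals $1$ when $d>d'$ and $2$ when $d=d'$.

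For (ii), I would transport the question to $\PP^1$ via the projection formula. Since $\pi^*\Ll$ is the pullback of a line bundle and $\pi_*(\Gg\otimes\pi^*\Ll)\cong \pi_*\Gg\otimes\Ll$, together with the identity $H^0(\FF,-)=H^0(\PP^1,\pi_*(-))$, one obtains
\[
\mathrm{h}^0\bigl(\FF,\Gg\otimes \pi^*\Ll\bigr)=\mathrm{h}^0\bigl(\PP^1,\pi_*\Gg\otimes\Ll\bigr).
\]
In the case $d>d'$, $\pi_*\Gg\cong \Oo_{\PP^1}(r)$, so the right-hand side is nonzero if and only if $r+k\ge 0$, i.e., $k\ge -r$, yielding infimum $-r$. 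In the case $d=d'$, $\pi_*\Gg\cong \Oo_{\PP^1}(r)\oplus\Oo_{\PP^1}(s)$ with $r\ge s$, and the right-hand side is nonzero exactly when the larger summand has a section, i.e., $r+k\ge 0$, again yielding infimum $-r$.

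The hard part will be the torsion-freeness step in (i); once $\pi_*\Gg$ is known to be a vector bundle of the asserted rank, the rest of the proof is essentially bookkeeping with the projection formula and the elementary fact that a direct sum of line bundles on $\PP^1$ has a global section exactly when the largest summand does.
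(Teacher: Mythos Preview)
The paper does not actually prove Proposition~\ref{AB}; it is quoted from the survey \cite{aprodu2012rank} (see the sentence immediately preceding the statement). So there is no ``paper's own proof'' to compare against, and your task reduces to checking that your argument stands on its own.

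Your argument is correct in both parts. A couple of remarks. For (i), the torsion-freeness of $\pi_*\Gg$ is not the hard part you make it out to be: on an integral scheme the pushforward of a torsion-free sheaf under any morphism is torsion-free, and your paragraph is exactly the standard one-line verification of this fact. Once $\pi_*\Gg$ is locally free, computing its rank as $\mathrm{h}^0(\Gg|_f)$ for generic $f$ is justified by flat base change to the generic point of $\PP^1$ (or Grauert's theorem); you might want to say this explicitly rather than leave it implicit. For (ii), your use of the projection formula and the identification $H^0(\FF,-)=H^0(\PP^1,\pi_*(-))$ is exactly right, and the case analysis is clean.

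In short: your proof is complete and is the standard argument one finds in the references the paper cites (Brosius, Br\^{\i}nz\u{a}nescu, Aprodu). There is nothing to fix.
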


\noindent In Proposition \ref{AB} we may associate to $\Ff$ a pair of integers $(d,r)$ for which there exists an exact sequence
\begin{equation}\label{eqa111}
0 \to \Oo_{\FF}(dh+rf) \to \Ff \to \Ii_{Z, \FF}(d'h+r'f) \to 0
\end{equation}
for some zero-dimensional subscheme $Z\subset \FF$. We call it a {\it canonical extension} of $\Ff$. We have
\[
\deg (Z) = c_2(\Ff)+a(d-r)-bd+2dr-d^2,
\]
together with $c_1(\Ff)=ah+bf$ and $r+r'=b$. We notice that 
\[
\dim\Ext^1_{\FF}(\Ii_{Z,\FF}(d'h+r'f), \Oo_{\FF}(dh+rf)) = \deg (Z)+\mathrm{h}^1(\Oo_{\FF}((d-d')h+(r-r')f)).
\]

\begin{remark}\label{pi}
In the sequence \eqref{eqa111}, the length of $Z$ measures the \textit{uniformity} of $\Ff$. That is, $\Ff$ has the same splitting type over all the fibers of $\pi$ if and only if $Z=\emptyset$. We say that such a bundle is \textit{$\pi$-uniform}; see \cite{brinzanescu1991algebraic} and \cite{ishimura1979pi}. For example, as we shall see in Example \ref{ex_f}, the logarithmic bundle $\Omega_{\FF}^1(\log f)$ associated to a fibre $f\subset \FF$ is $\pi$-uniform with the splitting type $(0,-2)$, and it admits a canonical extension (\ref{seqf2}). 
\end{remark}

\begin{remark}
As studied in \cite{brosius1983rank} and \cite{brinzanescu1991algebraic}, any vector bundle $\Ff$ of rank two on $\FF$ is described canonically the extension (\ref{eqa111}). If we call the twisted bundle $\Ff(-dh-rf)$ the \textit{normalization} of $\Ff$, then the extension $\eqref{eqa111}$ is unique if and only if the space of global sections of the normalization of $\Ff$ is a 1-dimensional. Thus the extension $\eqref{eqa111}$ is unique if $d>d'$, or $d=d'$ and $r>s$. 
\end{remark}


Let us consider logarithmic vector bundles attached to some arrangements of lower degree curves on $\FF$, using its canonical extension.    

\begin{example}\label{ex_f}
Set $\Dd=\{f\}$ for a fibre of $\pi : \FF \rightarrow \PP^1$. We have an exact sequence
\begin{equation}\label{seqf1}
0\to \Omega_{\FF}^1 \to \Omega_{\FF}^1(\log f) \to \Oo_{f} \to 0
\end{equation}
and the Chern classes of $\Omega_{\FF}^1(\log f)$ is $(c_1, c_2)=(-2h-2f, 2)$. Then we have $\left(\Omega^1_\FF(\log f)\right)_{|g} \cong \left( \Omega_{\FF}^1 \right)_{|g} \cong  \Oo_g\oplus \Oo_g(-2)$ for any fibre $g\ne f$. Now
tensor the sequence (\ref{seqf1}) with $\Oo_{f}$ to obtain
\[
0\to \mathcal{T}or_{\FF}^1(\Oo_f, \Oo_f) \to \left(\Omega_{\FF}^1\right)_{|f} \to \left(\Omega_{\FF}^1(\log f)\right)_{|f} \to \Oo_f \to 0. 
\]
From $\mathcal{T}or_{\FF}^1(\Oo_f, \Oo_f) \cong \Oo_f$ and Remark \ref{rmk31}, the restriction $\left(\Omega_{\FF}^1(\log f)\right)_{|f}$ fits into the following exact sequence
\[
0\to \Oo_f(-2) \to \left(\Omega_{\FF_e}^1(\log f)\right)_{|f} \to \Oo_f \to 0. 
\]
This implies that $\left(\Omega_{\FF_e}^1(\log f)\right)_{|f}\cong \Oo_f(a)\oplus \Oo_f(-2-a)$ for $a\in \{-1,0\}$. But the function $\psi : \PP^1 \rightarrow \ZZ_{\ge 0}$ defined by $p\mapsto a-b$, where $\left( \Omega_{\FF}^1(\log f) \right)_{|\pi^{-1}(p)}$ has splitting $(a,b)$ with $a\ge b$, is upper semi-continuous. Thus we have $a=0$ and so the bundle $\Omega_{\FF}^1(\log f)$ is $\pi$-uniform with $(d,r)=(0,-1)$. In particular, we get an exact sequence
\begin{equation}\label{seqf2}
0 \to \Oo_{\FF}(-f) \to \Omega_{\FF}^1(\log f) \to \Oo_{\FF}(-2h-f) \to 0,
\end{equation}
where $\dim\Ext_{\FF}^1(\Oo_{\FF}(-2h-f), \Oo_{\FF}(-f))=\mathrm{h}^1(\Oo_{\FF}(2h))=1$. Now, from the twisted standard exact sequence for $h\subset \FF$, we get a long exact sequence of cohomology 
\[
0\cong \mathrm{H}^1(\FF,\Oo_{\FF}(h))\to \mathrm{H}^1(\FF,\Oo_{\FF}(2h))\cong \CC \stackrel{\alpha}{\longrightarrow} \mathrm{H}^1(h,\Oo_h(-2))\cong \CC \to \mathrm{H}^2(\FF,\Oo_{\FF}(h))\cong 0.
\]
Here, we have $\mathrm{H}^1(\FF, \Oo_{\FF}(2h)) \cong \mathrm{Ext}_{\FF}^1(\Oo_{\FF}(-h), \Oo_{\FF}(h))$ and $\mathrm{H}^1(h, \Oo_h(-2)) \cong \Ext_{h}^1( \Oo_h(1), \Oo_h(-1))$ so that the map $\alpha$ sends an extension 
\begin{equation}\label{371}
0\to \Oo_{\FF}(h)\to \Ff \to \Oo_{\FF}(-h) \to 0
\end{equation}
to its restriction to $h$. The unique nontrivial extension class $\xi\in \Ext_h^1(\Oo_h(1), \Oo_h(-1))$ corresponds to an extension
\begin{equation}\label{eqhh}                   
0\to \Oo_h(-1) \to \Oo_h^{\oplus 2} \to \Oo_h(1) \to 0
\end{equation}
and let $\xi'\in \mathrm{H}^1(\FF,\Oo_{\FF}(2h))$ be an element with $\xi'_{|h}=\xi$. The class $\xi'$ corresponds to an extension (\ref{371}) whose restriction to $h$ is (\ref{eqhh}). Returning to the sequence \eqref{seqf2}, we have 
\[
\left(\Omega^1_\FF(\log f)\tensor\Oo_{\FF}(h+f)\right)_{|h}\cong\left(\Omega^1_\FF(\log f)\right)_{|_h}\cong\Oo_h(1)\dirsum\Oo_h(-1).
\]
In particular, the bundle $\Omega_{\FF}^1(\log f)$ does not correspond to the extension class $\xi'$ so that it corresponds to the trivial extension of \eqref{seqf2}. Therefore, we have $\Omega^1_\FF(\log f)\cong\Oo_{\FF}(-f)\dirsum\Oo_{\FF}(-2h-f)$. 
\end{example}

\noindent Extending the consequence of the above example, we give the following lemma for arrangement of many fibres on $\FF$.
\begin{proposition}\label{ex_fm}
	Let $\Dd=\{f_1, \ldots,f_m\}$ with $m$ distinct fibers of $\pi: \FF \rightarrow \PP^1$. Then we have
\[
\Omega^1_{\FF}(\log \Dd)\cong \Oo_\FF((m-2)f)\dirsum\Oo_\FF(-2h-f).
\]
\end{proposition}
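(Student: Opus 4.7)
The natural approach is induction on $m$, with the base case $m=1$ being Example \ref{ex_f}. For the inductive step, set $\Dd' = \{f_1, \dots, f_{m-1}\}$, so the inductive hypothesis reads $\Omega^1_\FF(\log \Dd') \cong \Oo_\FF((m-3)f) \dirsum \Oo_\FF(-2h-f)$. The generalized Poincar\'e residue sequence attached to the inclusion $\Dd' \subset \Dd$ gives
\[
0 \to \Omega^1_\FF(\log \Dd') \to \Omega^1_\FF(\log \Dd) \to \Oo_{f_m} \to 0,
\]
so the task reduces to identifying the class of $\Omega^1_\FF(\log \Dd)$ inside $\Ext^1_\FF(\Oo_{f_m},\, \Oo_\FF((m-3)f) \dirsum \Oo_\FF(-2h-f))$.

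The first key step is to show this Ext group is one-dimensional. Since $f_m$ is linearly equivalent to $f$, there is a locally free resolution $0 \to \Oo_\FF(-f) \to \Oo_\FF \to \Oo_{f_m} \to 0$; applying $\Hom_\FF(-, \Oo_\FF(L))$ and taking hypercohomology of the resulting two-term complex yields $\Ext^1_\FF(\Oo_{f_m}, \Oo_\FF(L)) \cong H^0(\Oo_\FF(L+f)|_{f_m})$. For $L = (m-3)f$ the restriction is trivial of degree $0$, contributing $\CC$; for $L = -2h-f$ the restriction has degree $-2$ on $f_m \cong \PP^1$, contributing $0$. Hence $\Ext^1 \cong \CC$, and only two isomorphism classes of extensions occur: the split one and one nontrivial class.

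Since $\Omega^1_\FF(\log \Dd)$ is locally free by the simple normal crossing assumption, it cannot realise the split extension, whose middle term $\Omega^1_\FF(\log \Dd') \dirsum \Oo_{f_m}$ carries the torsion summand $\Oo_{f_m}$. To match the claimed formula, I produce a concrete representative of the nontrivial class: multiplying by the section of $\Oo_\FF(f)$ cutting out $f_m$ gives
\[
0 \to \Oo_\FF((m-3)f) \to \Oo_\FF((m-2)f) \to \Oo_{f_m} \to 0,
\]
and taking the direct sum with the identity on $\Oo_\FF(-2h-f)$ produces an extension whose middle term is $\Oo_\FF((m-2)f) \dirsum \Oo_\FF(-2h-f)$; this middle term is locally free, so the extension is nontrivial. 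Uniqueness of the nontrivial class then yields the desired isomorphism. The main obstacle is the Ext computation itself, in particular confirming the degrees of the two restrictions to $f_m$; once these are fixed, the "split versus locally free" dichotomy does the rest.
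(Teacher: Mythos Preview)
Your argument is correct and takes a genuinely different route from the paper's. The paper does not induct: for $m\ge 2$ it computes the restriction of $\Omega^1_{\FF}(\log\Dd)$ to every fibre (using $\mathcal{T}or$ terms and semicontinuity as in Example~\ref{ex_f}) to see that the bundle is $\pi$-uniform of type $(0,-2)$, then invokes the canonical extension of Proposition~\ref{AB} to obtain
\[
0 \to \Oo_{\FF}((m-2)f) \to \Omega^1_{\FF}(\log\Dd) \to \Oo_{\FF}(-2h-f) \to 0,
\]
and finally checks $\Ext^1_{\FF}(\Oo_{\FF}(-2h-f),\Oo_{\FF}((m-2)f))\cong \mathrm{H}^1(\Oo_{\FF}(2h+(m-1)f))=0$ for $m\ge 2$, so the sequence splits. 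Your induction via the generalized Poincar\'e residue sequence and the one-dimensional $\Ext^1(\Oo_{f_m},-)$ is more elementary in that it bypasses the Brosius/canonical-extension machinery and the fibrewise splitting analysis; the ``locally free vs.\ torsion middle term'' dichotomy is a clean way to pin down the nontrivial class. The paper's approach, by contrast, is uniform in $m$ and showcases the structure theory for rank-two bundles on ruled surfaces that it uses elsewhere in the section. One small wording point: when you say ``only two isomorphism classes of extensions occur'', you mean two isomorphism classes of \emph{middle terms}; the extensions themselves are parametrized by all of $\CC$, but the $\CC^\times$-action identifies the nonzero ones.
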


\begin{proof}
Note that the case when $m=1$ is already dealt in Example \ref{ex_f} and so we assume that $m\ge 2$. Tensoring $\Oo_f$ to the exact sequnce
\begin{equation}
0\to\Omega^1_\FF\to\Omega^1_\FF(\log\Dd)\to\Dirsum_{i=1}^{m}\Oo_{f_i}\to 0, 
\end{equation}
we get
\begin{equation}\label{eqf2}
0\to\dirsum_{i=1}^m\mathcal{T}or^1(\Oo_{f_i},\Oo_f)\to\Oo_f(-2)\dirsum\Oo_f\to\left(\Omega^1_\FF(\log\Dd)\right)_{|f}\to\left(\Dirsum_{i=1}^m\Oo_{f_i}\right)\tensor\Oo_f\to 0.
\end{equation}
If $f\neq f_i$ for any $i$, the sequence (\ref{eqf2}) yields $$\left(\Omega^1_\FF(\log\Dd)\right)_{|f}\cong\Oo_f(-2)\dirsum\Oo_f.$$ 
On the otherhand, if $f=f_i$ for some $i$, we get an exact sequence
\[
0\to\Oo_f\to\Oo_f(-2)\dirsum\Oo_f\to\left(\Omega^1_\FF(\log\Dd)\right)_{|f}\to\Oo_f\to 0,
\]
so that
\[
0\to\Oo_f(-2)\to\left(\Omega^1_\FF(\log\Dd)\right)_{|f}\to\Oo_f\to 0.
\]
Thus we may take $\left(\Omega^1_\FF(\log\Dd)\right)_{|f}$ among two different types of $(0, -2)$ and $(-1, -1)$. In fact, it must be $(0,-2)$ because of the semicontinuity that we have mentioned in Example \ref{ex_f}. Thus $\Omega_\FF^1(\log\Dd)$ is $\pi$-uniform with 
\[
c_1(\Omega_\FF^1(\log\Dd))=-2h+(m-3)f,\quad c_2(\Omega_\FF^1(\log\Dd))=4-2m,\quad (d,r)=(0,m-2),
\]
and so we have an exact sequence
\begin{equation*}
	0 \to \Oo_{\FF}((m-2)f) \to \Omega_\FF^1(\log\Dd) \to \Oo_{\FF}(-2h-f) \to 0.
\end{equation*}
Then $\Ext^1(\Oo_\FF(-2h-f), \Oo_{\FF}((m-2)f))\cong\mathrm{H}^1(\Oo_\FF(2h+(m-1)f))\cong 0$ for $m\geq 2$, and the assertion follows. 
\end{proof}

\begin{example}\label{ex_h}
Set $\Dd=\{h\}$ for the exceptional divisor $h$. Then by Theorem \ref{main} we get that $\Omega_{\FF}^1(\log h) \cong (\eta^* \Omega_{\PP^2}^1)(h)$. Here, we suggest a different approach to study this logarithmic vector bundle in terms of the canonical exact sequence in Proposition \ref{AB} to obtain Proposition \ref{prop3.11}. For a fibre $f$ of the map $\pi : \FF \rightarrow \PP^1$, we get from \cite[Lemma 2.2]{HMPV} the following commutative diagram
\begin{equation}\label{ddia1}
\begin{tikzcd}[
  row sep=small,
  ar symbol/.style = {draw=none,"\textstyle#1" description,sloped},
  isomorphic/.style = {ar symbol={\cong}},
  ]
&0\ar[d] &0\ar[d]   \\
0\ar[r] &\Oo_f(1) \ar[r] \ar[d] &\Tt_f\cong \Oo_f(2)  \ar[d]\ar[r] & \Oo_q \ar[r]\ar[d, isomorphic] &0\\
0\ar[r] &\left( \Tt_{\FF}(-\log h)\right)_{|f} \ar[r] \ar[d] & \left( \Tt_{\FF}\right)_{|f}  \ar[r] \ar[d] &\Oo_q \ar[r] & 0\\
&\Oo_f \ar[r, isomorphic] \ar[d]&\Nn_{f|\FF}\cong \Oo_f \ar[d]  \\
&0 &0
\end{tikzcd}
\end{equation}
with $q:=h \cap f$, from which we get $\Omega^1_\FF(\log h)|_f \cong \Oo_f\dirsum\Oo_f(-1)$. In particular, the bundle $\Omega_{\FF}^1(\log h)$ is $\pi$-uniform and we get $(d,r)=(0,-2)$ in Remark \ref{pi}, which gives an exact sequence
\begin{equation}\label{extlogh}
0 \to \Oo_{\FF}(-2f) \to \Omega_{\FF}^1(\log h) \to \Oo_{\FF}(-h-f) \to 0. 
\end{equation}
Note that 
\[
\Ext_{\FF}^1(\Oo_{\FF}(-h-f), \Oo_{\FF}(-2f))\cong \mathrm{H}^1(\Oo_{\FF}(h-f))
\]
is $1$-dimensional. From the twisted standard sequence for $h\subset\FF$, we get a long exact sequence of cohomology
\[
0\cong\mathrm{H}^1(\FF,\Oo_{\FF}(-f))\to\mathrm{H}^1(\FF,\Oo_{\FF}(h-f))\cong\CC\stackrel{\beta}{\longrightarrow} \mathrm{H}^1(h,\Oo_h(-2))\cong \CC \to \mathrm{H}^2(\FF,\Oo_{\FF}(-f))\cong 0.
\]
Then, as in Example \ref{ex_f}, the map $\beta$ sends an extension
\begin{equation}\label{extlogh2}
0\to \Oo_{\FF}(-2f)\to \Ee \to \Oo_{\FF}(-h-f) \to 0
\end{equation}
to its $h$-restriction, and the unique nontrivial extension class of 
$\Ext^1_{h}(\Oo_h,\Oo_h(-2))\cong \mathrm{H}^1(h, \Oo_h(-2))$ corresponds to the middle term $\Oo_h(-1)^{\oplus 2}$. By tensoring by $\Oo_h$ the Poincar\'e residue sequence for $\Omega_{\FF}^1(\log h)$, we obtain
\[
0\to \mathcal{T}or_{\FF}^1(\Oo_h, \Oo_h) \cong \Oo_h(1) \to \left(\Omega_{\FF}^1\right)_{|h} \to \left( \Omega_{\FF}^1(\log h) \right)_{|h}\to \Oo_h \to 0
\]
and so we have two possibilities for $\left(\Omega_{\FF}^1(\log h)\right)_{|h}$; either $\Oo_h\oplus \Oo_h(-2)$ or $\Oo_h(-1)^{\oplus 2}$. On the other hand, applying $\eta_{*}$ to the Poincar\'e residue sequence for $\Omega_{\FF}^1(\log h)$, we get
\[
0\to\eta_{*}\Omega^1_{\FF}\cong\Omega^1_{\PP^2}\to\eta_{*}\Omega_{\FF}^1(\log h)\stackrel{\nu}{\rightarrow}\Oo_p\to \mathbf{R}^1\eta_{*}\Omega_{\FF}^1\cong\Oo_p\to \mathbf{R}^1\eta_{*}\Omega_{\FF}^1(\log h)\to 0.
\]
Since $\Omega_{\PP^2}^1$ is locally free, the map $\nu$ must be trivial and so we get
\[
\eta_{*}\Omega_{\FF}^1(\log h)\cong \Omega_{\PP^2}^1 \quad \text{ and }  \quad \mathbf{R}^1\eta_{*}\Omega_{\FF}^1(\log h)\cong 0.
\]
In particular, we get $\left(\Omega_{\FF}^1(\log h)\right)_{|h}\cong \Oo_h(-1)^{\oplus 2}$. Otherwise we would get $\mathbf{R}^1\eta_{*}\Omega_{\FF}^1(\log h)\cong \Oo_p$ by the theorem on formal functions. Therefore, $\Omega_{\FF}^1(\log h)$ corresponds to a unique nontrivial extension class in $\Ext_{\FF}^1(\Oo_{\FF}(-h-f), \Oo_{\FF}(-2f))$.
\end{example}

\begin{proposition}\label{prop3.11}
$\Omega_{\FF}^1(\log h)$ is a unique point of the moduli space $\mathbf{M}_{\FF}(-h-3f,2)$ of stable bundles with respect to the polarization $H=\Oo_{\FF}(2h+3f)$.
\end{proposition}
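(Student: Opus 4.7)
The plan is to combine the construction of $\Omega_\FF^1(\log h)$ from Example \ref{ex_h} with the canonical-extension classification of rank-$2$ bundles on Hirzebruch surfaces given in Proposition \ref{AB}. First I would record the Chern classes directly from the extension \eqref{extlogh}: $c_1 = -h-3f$ and $c_2 = (-2f) \cdot (-h-f) = 2$, which matches the moduli space. This reduces the proposition to showing that $\Omega_\FF^1(\log h)$ is $H$-stable for $H = 2h + 3f$ and that every point of $\mathbf{M}_\FF(-h-3f,2)$ is isomorphic to it.

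For stability, note that $\mu_H(\Omega_\FF^1(\log h)) = \tfrac{1}{2}(-h-3f)\cdot(2h+3f) = -7/2$. Suppose some line bundle $\Oo_\FF(\alpha h + \beta f)$ injects with $\alpha + 2\beta \geq -3$. Composing with the quotient in \eqref{extlogh}, the map is either zero, in which case it factors through $\Oo_\FF(-2f)$ and hence $\alpha \leq 0$, $\beta \leq -2$ (so $\alpha + 2\beta \leq -4$), or nonzero, in which case $\alpha \leq -1$, $\beta \leq -1$ (so $\alpha + 2\beta \leq -3$); here I use that the effective cone of $\FF_1$ is generated by $h$ and $f$. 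Equality in the second case forces $(\alpha, \beta) = (-1,-1)$ and would yield a splitting of \eqref{extlogh}, contradicting the nontriviality recorded in Example \ref{ex_h}.

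For uniqueness, pick any $\Ff \in \mathbf{M}_\FF(-h-3f,2)$. By Proposition \ref{AB} it fits into a canonical extension
\[
0 \to \Oo_\FF(dh + rf) \to \Ff \to \Ii_{Z,\FF}(d'h + r'f) \to 0
\]
with $d \geq d'$, $d + d' = -1$ and $r + r' = -3$. Stability forces $d + 2r < -7/2$, while the length formula $\deg(Z) = c_2(\Ff) + a(d-r) - bd + 2dr - d^2$ with $(a,b,c_2) = (-1,-3,2)$ gives $\deg(Z) = 2 + 2d + r + 2dr - d^2$. When $d = 0$, the bounds $r \leq -2$ (stability) and $r \geq -2$ ($\deg(Z) \geq 0$) pin down $(d, r, d', r') = (0, -2, -1, -1)$ and $Z = \emptyset$, so $\Ff$ is classified by $\Ext^1(\Oo_\FF(-h-f), \Oo_\FF(-2f)) \cong \mathrm{H}^1(\Oo_\FF(h-f)) \cong \CC$. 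The trivial extension is destabilized by its $\Oo_\FF(-h-f)$ summand, so the nontrivial extension, which is $\Omega_\FF^1(\log h)$ by Example \ref{ex_h}, is the only stable point.

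The main technical obstacle is excluding the case $d \geq 1$: there the stability upper bound $r < -7/4 - d/2$ and the nonnegativity $\deg(Z) \geq 0$ become jointly infeasible already at $d = 1$ (forcing $r \leq -3$ and $r \geq -1$ simultaneously), and an elementary computation shows the gap only widens for larger $d$, completing the classification.
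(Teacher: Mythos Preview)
Your stability argument is essentially identical to the paper's: both compose a putative destabilizing sub-line-bundle with the quotient map of \eqref{extlogh} and use the effective cone of $\FF_1$ to reach a contradiction, with the boundary case $(\alpha,\beta)=(-1,-1)$ excluded by nontriviality of the extension.

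Your uniqueness argument, however, takes a different route. The paper argues via Riemann--Roch and Serre duality: for any $\Gg\in\mathbf{M}_\FF(-h-3f,2)$ one computes $\chi(\Gg(2f))=1$, kills $\mathrm{H}^2$ by stability, and thereby produces a nonzero section of $\Gg(2f)$; stability then forces the resulting sub-line-bundle to be exactly $\Oo_\FF(-2f)$, and a Chern-class count gives $Z=\emptyset$. You instead invoke the canonical extension of Proposition~\ref{AB} and run a case analysis on the generic fibre invariant $d$, using the length formula for $Z$ together with the stability inequality to pin down $(d,r)=(0,-2)$. Both are correct. The paper's approach is shorter and avoids the $d\ge 1$ elimination, while yours is more in keeping with the Brosius framework the section is built around and makes explicit why no other generic splitting type can occur. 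Your remark that the infeasibility ``only widens'' for $d\ge 2$ is easily verified: the stability bound gives $r\le -\lceil(2d+7)/4\rceil$, whereas $\deg(Z)\ge 0$ forces $r\ge (d^2-2d-2)/(2d+1)$, and the latter already exceeds $-1$ once $d\ge 1$.
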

\begin{proof}
Consider a vector bundle $\Ee$ given by the nontrivial extension $\eqref{extlogh2}$. Suppose that $\Oo_{\FF}(D)=\Oo_{\FF}(ah+bf)$ is a destabilizing line subbundle of $\Ee$. Then we have
\begin{equation}\label{distabh}
\mu_{H}(\Oo_{\FF}(D))=a+2b>-\frac{7}{2}=\mu_{H}(\Ee).    
\end{equation}
Set $g: \Oo_{\FF}(D) \subset  \Gg \rightarrow \Oo_{\FF}(-h-f)$ be the composite with the surjection in (\ref{extlogh2}). If $g$ is trivial, then we have an injection $\Oo_{\FF}(D)\hookrightarrow\Oo_{\FF}(-2f)$. In particular, the divisor $-ah-(b+2)f$ is effective and so $a\le 0$ and $b\le -2$. This is impossible due to the inequality (\ref{distabh}). If $g$ is not trivial, then we get an injection $\Oo_{\FF}(D)\hookrightarrow\Oo_{\FF}(-h-f)$. Then the divisor $-(a+1)h-(b+1)f$ is effective and so $a\le -1$ and $b\le -1$, again contradicting to (\ref{distabh}), except the cases $(a,b)\in (-1,-1)$. The case is not possible, otherwise the extension (\ref{extlogh2}) would be trivial. Thus a vector bundle from (\ref{extlogh}) is stable with respect to $H$.

Conversely, let $\Gg\in \mathbf{M}_{\FF}(-h-3f, 2)$ and then we have $\chi(\Gg(2f))=1$ by the Riemann-Roch theorem. Note that
\[
\mathrm{H}^2(\Gg(2f))\cong \mathrm{H}^0(\Gg^\vee(-2h-5f))^\vee\cong  \mathrm{H}^0(\Gg(-h-2f))^\vee
\]
by the Serre duality, and this is trivial due to the stability of $\Gg$. In particular, we have $\mathrm{h}^0(\Gg(2f))\ge 1$ and so we get an exact sequence
\[
0\to \Oo_{\FF}(-2f+D) \rightarrow \Gg \rightarrow \Ii_{Z, \FF}(-h-D) \to 0
\]
for an effective divisor $D$ and a $0$-dimensional subscheme $Z$. By the stability of $\Gg$ we should have $D=0$, and then $\Gg$ admits the extension (\ref{extlogh2}). Therefore, $\Omega_{\FF}^1(\log h)$ is a unique point contained in a one-point space $\mathbf{M}_{\FF}(-h-3f, 2)$, namely $\left(\eta^{*}\Omega^1_{\PP^2}\right)(h)$.
\end{proof}

\begin{proposition}
For an arrangement $\Dd=\{h, f_1, \ldots,f_m\}$ with the exceptional divisor $h$ and $m$ distinct fibres $f_1, \dots, f_m$, 
we have
\[
\Omega^1_{\FF}(\log \Dd)\cong \Oo_\FF((m-2)f)\dirsum\Oo_\FF(-h-f).
\]
\end{proposition}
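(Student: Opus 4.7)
The plan is to mirror Proposition \ref{ex_fm}: realise $\Omega^1_{\FF}(\log\Dd)$ as the middle term of an extension of two explicit line bundles and then verify the extension splits. The cleanest tool for this is the relative logarithmic differential sequence attached to the ruling $\pi:\FF\to\PP^1$, which takes advantage of the clean decomposition $\Dd=\{h\}\cup\pi^{-1}(S)$ with $S=\{\pi(f_1),\ldots,\pi(f_m)\}\subset\PP^1$ into a horizontal part $h$ (a section with $h\cdot f=1$) and a vertical part $\pi^{-1}(S)$.

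For such a normal-crossings decomposition, the standard theory of logarithmic de Rham complexes supplies a canonical short exact sequence
\[
0 \to \pi^{*}\Omega^1_{\PP^1}(\log S) \to \Omega^1_{\FF}(\log\Dd) \to \Omega^1_{\FF/\PP^1}(\log h) \to 0.
\]
I would then identify both outer terms as line bundles on $\FF$. The left term is $\pi^{*}\Oo_{\PP^1}(m-2)\cong\Oo_{\FF}((m-2)f)$. For the right term, starting from $\omega_{\FF}\cong\Oo_{\FF}(-2h-3f)$ and $\omega_{\PP^1}\cong\Oo_{\PP^1}(-2)$ gives $\Omega^1_{\FF/\PP^1}\cong\omega_{\FF}\otimes\pi^{*}\omega_{\PP^1}^{\vee}\cong\Oo_{\FF}(-2h-f)$, and since $h$ meets each fibre transversally in a single point, $\Omega^1_{\FF/\PP^1}(\log h)\cong\Omega^1_{\FF/\PP^1}(h)\cong\Oo_{\FF}(-h-f)$. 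So $\Omega^1_{\FF}(\log\Dd)$ sits in the extension
\[
0 \to \Oo_{\FF}((m-2)f) \to \Omega^1_{\FF}(\log\Dd) \to \Oo_{\FF}(-h-f) \to 0,
\]
which is compatible with the Chern class values $c_1=-h+(m-3)f$, $c_2=2-m$ read off from the Poincar\'e residue sequence.

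The final step is to show this extension splits by verifying that $\Ext^1_{\FF}(\Oo_{\FF}(-h-f),\Oo_{\FF}((m-2)f))\cong H^1(\FF,\Oo_{\FF}(h+(m-1)f))$ vanishes. Twisting the standard sequence $0\to\Oo_{\FF}(-h)\to\Oo_{\FF}\to\Oo_h\to 0$ by $\Oo_{\FF}(h+(m-1)f)$ produces
\[
0 \to \Oo_{\FF}((m-1)f) \to \Oo_{\FF}(h+(m-1)f) \to \Oo_h(m-2) \to 0,
\]
and the associated long exact sequence, together with the vanishings $H^1(\PP^1,\Oo_{\PP^1}(m-1))=0$ and $H^1(\PP^1,\Oo_{\PP^1}(m-2))=0$ valid for $m\ge 1$, yields the desired vanishing.

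The main obstacle is justifying the relative logarithmic sequence, as the machinery of the log de Rham complex under a morphism is not developed elsewhere in the paper. A style-consistent alternative, staying within the Brosius canonical extension framework of Proposition \ref{AB} used in Proposition \ref{ex_fm}, is to first verify $\pi$-uniformity of $\Omega^1_{\FF}(\log\Dd)$ with generic splitting $(d,d')=(0,-1)$ via the sub-arrangement sequence combined with Example \ref{ex_h}, and then use the natural pullback inclusion $\Oo_{\FF}((m-2)f)\hookrightarrow\Omega^1_{\FF}(\log\Dd)$ together with a direct computation of $\pi_{*}\Omega^1_{\FF}(\log\Dd)\cong\Oo_{\PP^1}(m-2)$ (obtained by pushing the Poincar\'e residue sequence forward and using that $\pi_{*}$ of a torsion-free sheaf is torsion-free) to pin down $r=m-2$ and $Z=\emptyset$ in Proposition \ref{AB}, before concluding by the same $\Ext^1$ vanishing.
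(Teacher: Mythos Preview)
Your primary argument via the relative logarithmic cotangent sequence is correct and takes a genuinely different route from the paper. The paper establishes $\pi$-uniformity by restricting the Poincar\'e residue sequence to each fibre, treating separately the cases $f=f_i$ and $f\ne f_i$ (invoking in the latter the normal-bundle diagram already used in Example \ref{ex_h}), reads off the invariants $(d,r)=(0,m-2)$ from the Chern classes, and then appeals to the Brosius canonical extension to produce the same short exact sequence you obtain. Your approach bypasses the fibre-by-fibre analysis entirely: the sequence $0\to\pi^{*}\Omega^1_{\PP^1}(\log S)\to\Omega^1_{\FF}(\log\Dd)\to\Omega^1_{\FF/\PP^1}(\log h)\to 0$ delivers both outer line bundles at once, and your identifications of these as $\Oo_{\FF}((m-2)f)$ and $\Oo_{\FF}(-h-f)$ via $\omega_{\FF}\otimes\pi^{*}\omega_{\PP^1}^{\vee}$ are correct. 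This is shorter and more conceptual, and makes transparent why the horizontal divisor $h$ and the vertical fibres contribute independently; the cost, as you acknowledge, is importing machinery not developed elsewhere in the paper. Both proofs finish identically via the vanishing of $H^1(\Oo_{\FF}(h+(m-1)f))$ for $m\ge 1$, and the alternative you sketch in your final paragraph is essentially the paper's own argument.
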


\begin{proof}
Note that the case when $m=0$ is dealt in Example \ref{ex_h} and so we assume that $m\ge 1$. Tensoring the exact sequence
\begin{equation}\label{eq}
0\to \Omega^1_\FF\to\Omega^1_\FF(\log\Dd)\to\Oo_{h}\oplus \left(\Dirsum_{i=1}^{m}\Oo_{f_i}\right)\to 0
\end{equation}
with $\Oo_f$, we get an exact sequence
\begin{equation}\label{eqf}
0\to \mathcal{T}or_{\FF}^1(\Oo_h, \Oo_f)\oplus \left(\Dirsum_{i=1}^{m}\mathcal{T}or_{\FF}^1(\Oo_{f_i},\Oo_f)\right)\to \left(\Omega^1_{\FF}\right)_{|h}\to\left(\Omega^1_{\FF}(\log\Dd)\right)_{|f}\to\left(\Oo_h\oplus \left(\Dirsum_{i=1}^{m}\Oo_{f_i}\right)\right)\tensor\Oo_f\to 0.
\end{equation}
If $f=f_i$ for some $i$, then the sequence (\ref{eqf}) induces an exact sequence
\[
0\to\Oo_f(-2)\to \left( \Omega^1_\FF(\log\Dd)\right)_{|f}\to\Oo_q\dirsum\Oo_f\to 0,
\]
where $q:=h\cap f$. Thus we have
\[
\left( \Omega^1_\FF(\log\Dd)\right)_{|f} \cong \Oo_f\oplus \Oo_f(-1).
\]
On the other hand, if $f\neq f_i$ for any $i$, we obtain the following commutative diagram
\begin{equation}\label{ddia5}
\begin{tikzcd}[
  row sep=small,
  ar symbol/.style = {draw=none,"\textstyle#1" description,sloped},
  isomorphic/.style = {ar symbol={\cong}},
  ]
&0\ar[d] &0\ar[d]   \\
0\ar[r] &\Oo_f(1) \ar[r] \ar[d] &\Tt_f\cong \Oo_f(2)  \ar[d]\ar[r] & \Oo_q \ar[r]\ar[d, isomorphic] &0\\
0\ar[r] &\left( \Tt_{\FF}(-\log \Dd)\right)_{|f} \ar[r] \ar[d] & \left( \Tt_{\FF}\right)_{|f}  \ar[r] \ar[d] &\Oo_q \ar[r] & 0\\
&\Oo_f \ar[r, isomorphic] \ar[d]&\Nn_{f|\FF}\cong \Oo_f \ar[d]  \\
&0 &0&
\end{tikzcd}
\end{equation}
so that we still get $\left( \Omega^1_\FF(\log\Dd)\right)_{|f} \cong \Oo_f\oplus \Oo_f(-1)$. This implies in the setting of Remark \ref{pi} that the vector bundle $\Omega_\FF^1(\log\Dd)$ is $\pi$-uniform with 
\[
c_1(\Omega_\FF^1(\log\Dd))=-h+(m-3)f,\quad  c_2(\Omega_\FF^1(\log\Dd))=2-m, \quad (d,r)=(0,m-2),
\]
and so we get an exact sequence
\begin{equation}
0 \to \Oo_{\FF}((m-2)f) \to \Omega_\FF^1(\log\Dd) \to \Oo_{\FF}(-h-f) \to 0.
\end{equation}
Since we have $\Ext_{\FF}^1(\Oo_\FF(-h-f), \Oo_{\FF}((m-2)f))\cong \mathrm{H}^1(\Oo_\FF(h+(m-1)f))\cong 0$ for $m\geq 1$, the sequence in the above splits and the assertion follows.     
\end{proof}

Let $L$ be a line in $\PP^2$ with $p\not\in L$ so that $\widetilde{L}\in |\Oo_{\FF}(h+f)|$ is a rational curve with $\widetilde{L}\cap E=\emptyset$. 

\begin{proposition}\label{prop;m=1}
The logarithmic vector bundle $\Omega_{\FF}^1(\log \widetilde{L})$ is independent on the choice of $L$. 
\end{proposition}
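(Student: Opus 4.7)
The plan is to reduce the statement to a classification of surjections $\Oo_{\FF}(-f)^{\oplus 2}\twoheadrightarrow \Oo_h$ up to automorphisms of the source and target, and to observe that this classification consists of a single orbit, so that independence of $L$ is automatic.

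First I would invoke Example \ref{deg1} in the case $m=1$. Since $p\notin L$, it gives
$$\Omega^1_{\FF}(\log\widetilde{L}^{E})\cong \Oo_{\FF}(-H+E)^{\oplus 2}\cong \Oo_{\FF}(-f)^{\oplus 2}$$
(using $H=h+f$ and $E=h$), a bundle that manifestly does not depend on $L$. The generalized Poincar\'e residue sequence associated to the sub-arrangement $\{\widetilde{L}\}\subset \widetilde{L}^{E}$ then fits $\Omega^1_{\FF}(\log\widetilde{L})$ into
$$0\longrightarrow \Omega^1_{\FF}(\log\widetilde{L})\longrightarrow \Oo_{\FF}(-f)^{\oplus 2}\stackrel{\phi_L}{\longrightarrow}\Oo_h\longrightarrow 0,$$
so all of the $L$-dependence is concentrated in the surjection $\phi_L$.

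Next I would describe such surjections. Using $\Oo_{\FF}(f)|_h\cong \Oo_h(1)$, we have
$$\Hom(\Oo_{\FF}(-f)^{\oplus 2},\Oo_h)\cong \mathrm{H}^0(\Oo_h(1))^{\oplus 2}\cong \CC^{\oplus 4},$$
and $\phi_L$ corresponds to a pair $(s_1,s_2)$ of linear forms on $h\cong \PP^1$. Surjectivity is equivalent to $s_1$ and $s_2$ having no common zero, which, for linear forms on $\PP^1$, is the same as their being linearly independent in $\mathrm{H}^0(\Oo_h(1))\cong \CC^2$. Since $\Aut(\Oo_{\FF}(-f)^{\oplus 2})\cong \mathrm{GL}_2(\CC)$ acts on $\mathrm{H}^0(\Oo_h(1))^{\oplus 2}$ by change of basis in the source and is transitive on ordered bases, every such $\phi_L$ lies in a single $\Aut(\Oo_{\FF}(-f)^{\oplus 2})\times \Aut(\Oo_h)$-orbit. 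Hence all the kernels, and in particular all the bundles $\Omega^1_{\FF}(\log\widetilde{L})$, are pairwise isomorphic.

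The main delicate point is really just bookkeeping: the isomorphism $\Omega^1_{\FF}(\log\widetilde{L}^{E})\cong \Oo_{\FF}(-f)^{\oplus 2}$ produced by Example \ref{deg1} is not canonical in $L$. But since we are already modding out by the full automorphism group of the common model $\Oo_{\FF}(-f)^{\oplus 2}$, no choice of identification affects the conclusion.
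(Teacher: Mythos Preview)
Your argument is correct and takes a genuinely different route from the paper's.

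The paper proceeds via the Brosius canonical extension: it computes the generic fibre splitting of $\Omega_{\FF}^1(\log\widetilde{L})$ to find $d=0$, then $r=-2$, obtaining
\[
0 \to \Oo_{\FF}(-2f) \to \Omega_{\FF}^1(\log\widetilde{L}) \to \Oo_{\FF}(-h) \to 0.
\]
The relevant $\Ext^1$ group is $3$-dimensional, and the paper pins down the extension class by restricting to $h$: it shows the class is nontrivial but lies in the kernel of the restriction map $\gamma$ to $\Ext^1_h(\Oo_h(1),\Oo_h(-2))$, which is a one-dimensional subspace, hence determines the bundle uniquely.

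Your approach instead leverages Example~\ref{deg1} (and thus Theorem~\ref{main}) to identify $\Omega_{\FF}^1(\log\widetilde{L}^{E})\cong\Oo_{\FF}(-f)^{\oplus 2}$ directly, so that the residue sequence realizes $\Omega_{\FF}^1(\log\widetilde{L})$ as the kernel of a surjection $\Oo_{\FF}(-f)^{\oplus 2}\to\Oo_h$. The transitivity of $\mathrm{GL}_2(\CC)=\Aut(\Oo_{\FF}(-f)^{\oplus 2})$ on ordered bases of $\mathrm{H}^0(\Oo_h(1))\cong\CC^2$ then finishes the argument in one line. This is shorter and more elementary, and it sidesteps the fibre-splitting and $\Ext$ analysis entirely. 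What the paper's approach buys is consistency with the section's theme: it exhibits the canonical extension of $\Omega_{\FF}^1(\log\widetilde{L})$ explicitly, which your argument does not produce (though it is not required by the statement).
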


\begin{proof}
From the commutative diagram
\begin{equation}
\begin{tikzcd}[
  row sep=small,
  ar symbol/.style = {draw=none,"\textstyle#1" description,sloped},
  isomorphic/.style = {ar symbol={\cong}},
  ]
&0\ar[d] &0\ar[d]   \\
0\ar[r] &\Oo_f(1) \ar[r] \ar[d] &\Tt_f\cong \Oo_f(2)  \ar[d]\ar[r] & \Oo_{\widetilde{L}\cap f} \ar[r]\ar[d, isomorphic] &0\\
0\ar[r] &\left( \Tt_{\FF}(-\log \widetilde{L})\right)_{|f} \ar[r] \ar[d] & \left( \Tt_{\FF}\right)_{|f}  \ar[r] \ar[d] &\Oo_{\widetilde{L}\cap f} \ar[r] & 0~,\\
&\Oo_f \ar[r, isomorphic] \ar[d]&\Nn_{f|\FF}\cong \Oo_f \ar[d] &   \\
&0 &0 &
\end{tikzcd}
\end{equation}
we have $\left(\Omega^1_{\FF}(\log \widetilde{L})\right)_{|f}\cong\Oo_f\oplus\Oo_f(-1)$ so take the invariant $d=0$. Applying $\pi_*$ to the Poincar\'e residue sequence for $\Omega^1_{\FF}(\log \widetilde{L})$, we get an exact sequence
\[
0\to\Oo_{\PP^1}(-2)\to\pi_*\Omega^1_{\FF}(\log \widetilde{L})\xrightarrow{\nu}\Oo_{\PP^1} \to \Oo_{\PP^1} \to 0.
\]
Since the last map is a surjective endomorphism between free $\Oo_{\PP^1}$-modules, $\nu$ is trivial map. Thus $\pi_*\Omega^1_{\FF}(\log \widetilde{L})\cong \Oo_{\PP^1}(-2)$, so we get $r=-2$. Now for $\Omega^1_{\FF}(\log \widetilde{L})$, 
\[
(c_1,c_2)=(-h-2f,\,\, 2)\qquad\qquad (d,r)=(0,-2).
\]
From Proposition \ref{AB} we have an extension
\begin{equation}\label{L1}
    	0 \to \Oo_{\FF}(-2f) \to \Omega_\FF^1(\log \widetilde{L}) \to \Oo_{\FF}(-h) \to 0.
\end{equation}
Note that $\Ext^1_{\FF}(\Oo_{\FF}(-h),\Oo_{\FF}(-2f))\cong\mathrm{H}^1(\Oo_{\FF}(h-2f))$ is 3-dimensional. 
Similar to Example \ref{ex_f} and Example \ref{ex_h}, we get an exact sequence
\begin{equation}\label{eq18}
\xymatrix{
0\ar[r] &\mathrm{H}^1(\FF,\Oo_{\FF}(-2f)) \ar[r] \ar[d]^-{\cong} &\mathrm{H}^1(\FF,\Oo_{\FF}(h-2f))  \ar[d]^-{\cong}\ar[r]^-{\gamma} & \mathrm{H}^1(h,\Oo_{h}(-3)) \ar[r]\ar[d]^-{\cong} &0\\
0\ar[r] &\mathrm{H}^0(\Oo_{\PP^1}) \ar[r] & \mathrm{H}^0(\Oo_{\PP^1}\dirsum\Oo_{\PP^1}(1)) \ar[r] &\mathrm{H}^0(\Oo_{\PP^1}(1)) \ar[r] & 0
}
\end{equation}
Thus the extension
\[
0 \to \Oo_h(-2) \to \Gg \to \Oo_h(1) \to 0
\]
can be lifted to the extension
\begin{equation}\label{extL1}
0 \to \Oo_{\FF}(-2f) \to \Gg' \to \Oo_{\FF}(-h) \to 0.
\end{equation}
On the other hand, tensoring $\Oo_h$ to the Poincar\'e residue sequence for $\Omega^1_{\FF}(\log \widetilde{L})$ we obtain $\left(\Omega^1_{\FF}\right)_{|h}\cong\left(\Omega^1_{\FF}(\log \widetilde{L})\right)_{|h}\cong\Oo_h(1)\dirsum\Oo_h(-2)$. Hence $\Omega^1_{\FF}(\log \widetilde{L})$ corresponds to the pre-image of a trivial extension class in $\Ext^1_{h}(\Oo_h(1),\Oo_h(-2))$. From this argument, we remark that the bundle $\Omega^1_{\FF}(\log \widetilde{L})$ is independent on the choice of curve $L$, up to isomorphism. 

Let us assume that it corresponds to a trivial class of $\Ext^1_{\FF}(\Oo_{\FF}(-h),\Oo_{\FF}(-2f))$. Then the extension is split, that is $\Omega^1_{\FF}(\log \widetilde{L})\cong\Oo_{\FF}(-2f)\oplus\Oo_{\FF}(-h)$, so that $\eta_{*}\Omega^1_{\FF}(\log \widetilde{L})\cong \Oo_{\PP^2}(-2)\oplus\Ii_{p,\PP^2}$. But after take $\eta_{*}$ to the Poincar\'e residue sequence for $\Omega^1_{\FF}(\log \widetilde{L})$, we get 
\[
0\to\Omega_{\PP^2}^1\to\eta_{*}\Omega^1_{\FF}(\log \widetilde{L})\to\Oo_{L}\to\Oo_p\stackrel{\cong}{\longrightarrow}\Oo_p\to 0.
\]
Thus $\eta_{*}\Omega^1_{\FF}(\log \widetilde{L})$ corresponds to the unique non-trivial extension class in $\Ext_{\PP^2}^1(\Oo_{L},\Omega_{\PP^2}^1)$. As known well, it is the logarithmic bundle $\Omega_{\PP^2}^1(\log L)\cong \Oo_{\PP^2}(-1)\oplus\Oo_{\PP^2}(-1)$ for a line which does not pass through $p=\eta(h)$ on $\PP^2$, a contradiction. Therefore, our bundle $\Omega^1_{\FF}(\log \widetilde{L})$ corresponds to a non-trivial class in $\Ext^1_{\FF}(\Oo_{\FF}(-h),\Oo_{\FF}(-2f))$, which maps to $0\in\Ext_h^1(\Oo_h(1),\Oo_h(-2))$. Thus it is independent on the choice of $L$.
\end{proof}

\begin{remark}
In \cite{HMPV} the Torelli problem on the lines in $\PP^2$ has a positive answer if the number $k$ of blown-up points is at least $6$. By Proposition \ref{prop;m=1}, the Torelli problem has a negative answer for $k=1$. So the natural question would be what the minimum $k$ is for a positive answer to the Torelli problem. 
\end{remark}


\bibliographystyle{abbrv}
\bibliography{citation}




\end{document}